\def\dj{d\kern-0.4em\char"16\kern-0.1em}
\def\Dj{\mbox{\raise0.3ex\hbox{-}\kern-0.4em D}}
\newtheorem{theorem}{Theorem}[section]
\newtheorem{lemma}[theorem]{Lemma}
\newtheorem{proposition}[theorem]{Proposition}
\newtheorem{example}[theorem]{Example}
\renewcommand{\@dotsep}{10000}
\newenvironment{proof}
{\noindent
{\it Proof.}}
{\hspace{\stretch{1}}%
$\Box$}
\newcounter{primer}[section]
\tikzset{my loop/.style =  {to path={
  \pgfextra{}
  [looseness=4,min distance=5mm]
  \tikz@to@curve@path},font=\sffamily\small
  }}
\newcommand{\simp}{\stackrel{p}{\sim}}
\newcommand{\sime}{\stackrel{e}{\sim}}
\newcommand{\myitem}[1]{%
\item[#1]\protected@edef\@currentlabel{#1}%
}
\begin{document}

\thispagestyle{empty}
\begin{center}
\Large{Difference graphs of finite abelian groups with two Sylow subgroups}
\vspace{3mm}

\normalsize{I. Bo\v snjak, R. Madar\' asz, S. Zahirovi\' c}
\end{center}

\begin{abstract}
The power graph and the enhanced power graph of a group $\mathbf G$ are simple graphs with vertex set $G$; two elements of $G$ are adjacent in the power graph if one of them is a power of the other, and they are adjacent in the enhanced power graph if they generate a cyclic subgroup. The difference graph of a group $\mathbf G$, denoted by $\mathcal D(\mathbf G)$, is the difference of the enhanced power graph and the power graph of group $\mathbf G$ with all the isolated vertices removed.  In this paper, we prove that, if a pair of finite abelian groups of order divisible by at most two primes have isomorphic difference graphs, then they are isomorphic.
%
\end{abstract}

\section{Introduction}

In this paper, we deal with graphs defined on groups. In \cite{cameron-graphs-defined-on-groups}, the reader may read about various graphs associated with groups, including the power graph, the enhanced power graph, the deep commuting graph, the commuting graph, and the generating graph. On what could these graphs bring to graph theory and algebra (especially group theory), the reader may refer to \cite{grafovi i grupe}. 
%

The {\bf directed power graph} of a group $\mathbf G$, denoted by $\vec{\mathcal P}(\mathbf G)$, is the simple digraph with vertex set $G$ such that $x\rightarrow y$ if $y$ is a power of $x$. It was introduced by Kelarev and Quinn \cite{kelarev-quinn}. If $y$ is a power of $x$, but $x$ is not a power of $y$, we shall write $x\twoheadrightarrow y$.
%
%
The simple graph obtained by removing the directions of arcs of the directed power graph of a group $\mathbf G$ is called the {\bf power graph} of the group $\mathbf G$; we denote it by $\mathcal P(\mathbf G)$. We write $x\simp y$ if $x$ and $y$ are adjacent in the power graph.
%
%
The {\bf enhanced power graph} of a finite group $\mathbf G$, denoted by $\mathcal P_e(\mathbf G)$, is the simple graph whose vertices are all elements of the group such that two vertices are adjacent if they are both powers of a single element of the group, i.e. if they generate a cyclic subgroup. We write $x\sime y$ if $x$ and $y$ are adjacent in the enhanced power graph. The term enhanced power graph was introduced by Aalipour et al. \cite{on the structure}, although this notion was studied in an even earlier paper, where the authors called it the cyclic graph of a group \cite{abdolahi}. 
%

In \cite{cameron-graphs-defined-on-groups}, Cameron proposed studying differences of these graphs; among others, the difference of the enhanced power graph and the power graph, which is the graph that we call the difference graph of a group. More precisely, the {\bf difference graph} of a group $\mathbf G$ is the graph obtained as the difference of the enhanced power graph and the power graph, and by removing all the isolated vertices. We denote it by $\mathcal D(\mathbf G)$.   Biswas et al. \cite{biswas} studied connectedness and perfectness of the difference graph. They classified all finite nilpotent groups with perfect difference graph. Parveen et al. \cite{parveen-kumar-panda} classified all finite nilpotent groups whose difference graph is split, chordal, dominatable, threshold, or a star graph.

Numerous articles deal with the question when the above-mentioned graphs associated with a group determine the original structure. Also, many authors have studied when these graphs determine each other. Cameron and Ghosh \cite{power graph} proved that every two finite abelian groups with isomorphic power graphs are isomorphic, and they also showed that this does not hold for any two finite groups. Cameron \cite{power graph 2} proved that any two finite groups with isomorphic power graphs have isomorphic directed power graphs. Furthermore, in \cite{nas prvi}, the authors showed that the enhanced power graph and the power graph of any finite group determine each other. Papers by Cameron, Guerra and Jurina \cite{cameron-guerra-jurina} and Zahirovi\' c \cite{zahirovic-1,zahirovic-2} deal with the same question on torsion-free groups, in general. We seek to obtain the analogous result for difference graphs of finite abelian groups. To that end, among other things, we study neighborhood classes and maximal complete bipartite subgraphs of difference graphs of finite abelian groups. As the main result of this paper, we prove that $\mathcal D(\mathbf G)\cong\mathcal D(\mathbf G)$ implies $\mathbf G\cong\mathbf H$ for any pair of finite abelian group $\mathbf G$ and $\mathbf H$ of orders divisible by exactly two primes.
%

\section{Preliminaries}

Here we list some statements that will be useful later in the manuscript.

For graphs $\Gamma$ and $\Delta$, the {\bf strong product} of $\Gamma$ and $\Delta$ is the graph $\Gamma\boxtimes\Delta$ with vertex set $V(\Gamma)\times V(\Delta)$ such that two different vertices $(x_1,y_1)$ and $(x_2,y_2)$ are adjacent in $\Gamma\boxtimes\Delta$ if $x_1$ and $x_2$ are adjacent in $\Gamma$
or $x_1=x_2$, and $y_1$ and $y_2$ are adjacent in $\Delta$ or $y_1=y_2$.

\begin{proposition}[{\cite{nas prvi}}]\label{proizvod}
Let $\mathbf G$ and $\mathbf H$ be finite groups. Then $\mathcal P_e(\mathbf G\times\mathbf H)= \mathcal P_e(\mathbf G)\boxtimes\mathcal P_e(\mathbf H)$  if and only if $\mathrm {gcd}(o(g),o(h))=1$ for all $g\in G$ and $h\in H$.
\end{proposition}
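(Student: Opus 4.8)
The plan is to compare the two graphs vertex by vertex and edge by edge, noting that both have vertex set $G\times H$. First I would translate the two adjacency relations. Two distinct vertices $(g_1,h_1)$ and $(g_2,h_2)$ are adjacent in the strong product $\mathcal P_e(\mathbf G)\boxtimes\mathcal P_e(\mathbf H)$ precisely when $\langle g_1,g_2\rangle$ is cyclic \emph{and} $\langle h_1,h_2\rangle$ is cyclic; indeed, ``$g_1\sime g_2$ or $g_1=g_2$'' is exactly the assertion that $\langle g_1,g_2\rangle$ is cyclic, and likewise in the second coordinate. On the other hand, the same two vertices are adjacent in $\mathcal P_e(\mathbf G\times\mathbf H)$ precisely when $C:=\langle (g_1,h_1),(g_2,h_2)\rangle$ is cyclic. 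Since the two coordinate projections map $C$ onto $\langle g_1,g_2\rangle$ and $\langle h_1,h_2\rangle$, and homomorphic images of cyclic groups are cyclic, every edge of $\mathcal P_e(\mathbf G\times\mathbf H)$ is automatically an edge of the strong product. Thus one inclusion holds with no hypothesis at all, and the entire content of the proposition lies in the reverse inclusion.

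For the \emph{if} direction, assume $\gcd(o(g),o(h))=1$ for all $g\in G$ and $h\in H$, and take any edge of the strong product, so that $A:=\langle g_1,g_2\rangle=\langle c\rangle$ and $B:=\langle h_1,h_2\rangle=\langle c'\rangle$ are cyclic. Then $\gcd(o(c),o(c'))=1$ by hypothesis, so $A\times B=\langle (c,c')\rangle$ is cyclic of order $o(c)\,o(c')$. Since $C\le A\times B$ and subgroups of cyclic groups are cyclic, $C$ is cyclic, i.e.\ the pair is an edge of $\mathcal P_e(\mathbf G\times\mathbf H)$. Combined with the automatic inclusion from the first paragraph, this yields equality of the two graphs.

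For the \emph{only if} direction I would argue by contraposition: suppose some $g\in G$ and $h\in H$ have $\gcd(o(g),o(h))$ divisible by a prime $p$. Put $a=g^{o(g)/p}$ and $b=h^{o(h)/p}$, elements of order $p$ in $\mathbf G$ and $\mathbf H$ respectively, and consider the distinct vertices $(a,1)$ and $(1,b)$. Here $\langle a,1\rangle=\langle a\rangle$ and $\langle 1,b\rangle=\langle b\rangle$ are cyclic, so these vertices are adjacent in the strong product; but $\langle (a,1),(1,b)\rangle=\langle a\rangle\times\langle b\rangle\cong C_p\times C_p$ is not cyclic, so they are non-adjacent in $\mathcal P_e(\mathbf G\times\mathbf H)$. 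Hence the two graphs differ, which is the contrapositive we wanted. The only real subtlety, and the step I would take most care over, is the reverse inclusion discussed above: it rests on the elementary but essential fact that a direct product of two cyclic groups is cyclic exactly when their orders are coprime, and it is precisely this fact that both forces the coprimality hypothesis in the ``if'' direction and supplies the counterexample $(a,1),(1,b)$ when coprimality fails.
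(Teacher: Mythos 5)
Your proof is correct: the ``free'' inclusion via the coordinate projections, the coprimality argument showing $A\times B$ is cyclic for the converse inclusion, and the $C_p\times C_p$ counterexample for the ``only if'' direction together give a complete and standard argument. Note that the paper itself offers no proof of this proposition --- it is quoted from the cited reference \cite{nas prvi} --- and your argument is essentially the one given there, so there is nothing further to compare.
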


\begin{lemma}[\cite{parveen-kumar-panda}]\label{deljivost}
Let $a$ and $b$ be elements of a finite group $\mathbf G$. If $o(a)$ divides $o(b)$, then $a$ and $b$ are not adjacent in $\mathcal D(\mathbf G)$.
\end{lemma}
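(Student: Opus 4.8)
The plan is to argue by contradiction, exploiting the defining fact that an edge of the difference graph must be an edge of the enhanced power graph but not of the power graph. Suppose, contrary to the claim, that $a$ and $b$ are adjacent in $\mathcal D(\mathbf G)$. By definition of the difference graph this forces two things simultaneously: first, $a\sime b$, so that $a$ and $b$ generate a cyclic subgroup $\langle a,b\rangle=\langle c\rangle$ for some $c\in G$; and second, $a\not\simp b$, so neither of $a,b$ is a power of the other. I would then derive a contradiction with the second condition using only the divisibility hypothesis $o(a)\mid o(b)$ together with the cyclic structure furnished by the first condition.

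The heart of the argument is the standard fact about the subgroup lattice of a finite cyclic group: in $\langle c\rangle$ there is, for each divisor $d$ of $o(c)$, a \emph{unique} subgroup of order $d$, and these subgroups are ordered by divisibility, so the subgroup of order $d_1$ is contained in the subgroup of order $d_2$ precisely when $d_1\mid d_2$. Here both $\langle a\rangle$ and $\langle b\rangle$ are subgroups of $\langle c\rangle$, of orders $o(a)$ and $o(b)$ respectively. Since $o(a)\mid o(b)$, the uniqueness and nesting of cyclic subgroups give $\langle a\rangle\subseteq\langle b\rangle$. In particular $a\in\langle b\rangle$, i.e.\ $a$ is a power of $b$.

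This is exactly the conclusion I need: $a$ being a power of $b$ means $a\simp b$, contradicting the requirement $a\not\simp b$ that an edge of $\mathcal D(\mathbf G)$ must satisfy. Hence no such edge can exist, and $a,b$ are non-adjacent in $\mathcal D(\mathbf G)$, as desired.

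I do not expect a genuine obstacle in this proof; the only point requiring care is conceptual rather than computational, namely remembering that adjacency in the difference graph is a conjunction of a positive condition (an enhanced-power edge, which supplies a common cyclic overgroup) and a negative condition (no power-graph edge). The divisibility hypothesis interacts cleanly with the lattice of subgroups of the cyclic group $\langle c\rangle$, so once the cyclic overgroup is in hand the containment $\langle a\rangle\subseteq\langle b\rangle$ is immediate and the contradiction follows with no further case analysis.
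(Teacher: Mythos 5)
Your proof is correct. The paper does not prove this lemma itself --- it is quoted as a citation from Parveen, Kumar and Panda --- so there is no in-paper argument to compare against; your route (an edge of $\mathcal D(\mathbf G)$ supplies a common cyclic overgroup $\langle c\rangle$, uniqueness of the subgroup of each order in a finite cyclic group forces $\langle a\rangle\subseteq\langle b\rangle$ from $o(a)\mid o(b)$, hence $a\simp b$, contradicting the absence of a power-graph edge) is the standard argument and precisely what the cited result rests on.
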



\begin{proposition}[\cite{parveen-kumar-panda}]\label{bipartite D}
Let $\mathbf G$ be a finite nilpotent group which is not a $p$-group. Then $\mathcal D(\mathbf G)$ is a bipartite graph if and only if $\mathbf G$ is a direct product of its Sylow-subgroups $\mathbf P_1$ and $\mathbf P_2$, where the exponent of at least one of these subgroups is a prime number.
\end{proposition}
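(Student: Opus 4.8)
The plan is to exploit the fact that a finite nilpotent group is the internal direct product of its Sylow subgroups, and to translate $\mathcal D$-adjacency into a purely arithmetic condition on element orders. The key observation is the contrapositive of Lemma~\ref{deljivost}: if $x$ and $y$ are adjacent in $\mathcal D(\mathbf G)$, then $o(x)$ and $o(y)$ are incomparable under divisibility. Conversely, whenever $\langle x,y\rangle$ is cyclic we have $x\sime y$, and inside a cyclic group $x$ and $y$ are $\mathcal D$-adjacent \emph{exactly} when their orders are incomparable (since $\langle x\rangle\subseteq\langle y\rangle$ iff $o(x)\mid o(y)$). I would prove the two implications separately, writing $\mathbf G=\mathbf P_1\times\cdots\times\mathbf P_k$ with $k\ge 2$ and $|\mathbf P_i|$ a power of the prime $p_i$.

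For the implication from the structural condition to bipartiteness, suppose $k=2$ and, without loss of generality, $\exp(\mathbf P_1)=p_1$. Then every element of $\mathbf P_1$ has order in $\{1,p_1\}$, while every element of $\mathbf P_2$ has order a power of $p_2$. I would $2$-color each vertex $(a,b)$, with $a\in P_1$ and $b\in P_2$, according to whether $a=e$ or $a\neq e$, and show this is a proper coloring. Indeed, if $(a_1,b_1)$ and $(a_2,b_2)$ are adjacent in $\mathcal D(\mathbf G)$, then $o(a_1)o(b_1)$ and $o(a_2)o(b_2)$ are incomparable; since the $p_2$-parts $o(b_1),o(b_2)$ are always comparable, being powers of a single prime, incomparability forces $o(a_1)\neq o(a_2)$, i.e.\ exactly one of $a_1,a_2$ equals $e$. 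Hence every edge joins the two color classes, and $\mathcal D(\mathbf G)$ is bipartite.

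For the converse I would argue contrapositively: assuming $\mathbf G$ is not of the stated form, I exhibit a triangle in $\mathcal D(\mathbf G)$, ruling out bipartiteness. The negation splits into two cases. If $k\ge 3$, choose $x_i\in P_i$ of order $p_i$ for $i=1,2,3$; then $\langle x_1,x_2,x_3\rangle$ is cyclic of order $p_1p_2p_3$, and elements of orders $p_1p_2$, $p_2p_3$, $p_1p_3$ inside it have pairwise incomparable orders, hence form a triangle. If $k=2$ but both exponents are non-prime, pick $x_1\in P_1$ of order $p_1^2$ and $x_2\in P_2$ of order $p_2^2$; inside the cyclic group $\langle x_1,x_2\rangle\cong C_{p_1^2 p_2^2}$ the elements of orders $p_1^2$, $p_1p_2$, $p_2^2$ again have pairwise incomparable orders and form a triangle.

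The routine verifications—that the relevant subgroups are cyclic because the constituent orders are coprime, and that incomparable orders inside a cyclic group yield $\mathcal D$-edges—are immediate from the preliminaries. The only point requiring care is the coloring step in the first implication: one must use that the Sylow component of prime exponent contributes an order lying in $\{1,p_1\}$, whereas the complementary component always contributes comparable prime-power orders, so that no monochromatic edge can arise. This asymmetry between the two Sylow subgroups is precisely what the prime-exponent hypothesis supplies, and it is the crux of the argument; note also that removing isolated vertices is irrelevant here, since bipartiteness is equivalent to the absence of odd cycles.
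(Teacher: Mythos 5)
Your proposal is correct. Note that the paper does not prove this proposition at all: it is imported verbatim from the cited reference of Parveen, Kumar and Panda, so there is no in-paper argument to compare against. Your self-contained proof is sound. The key reduction --- that two elements lying in a common cyclic subgroup are $\mathcal D$-adjacent exactly when their orders are incomparable under divisibility --- is valid because a finite cyclic group has a unique subgroup of each order dividing the group order, so $o(x)\mid o(y)$ is equivalent to $x\in\langle y\rangle$ there. The forward direction only needs Lemma~\ref{deljivost} together with the observation that the orders of the $\mathbf P_2$-components are powers of a single prime and hence always comparable, which forces every edge to join a vertex with trivial $\mathbf P_1$-component to one with $\mathbf P_1$-component of order $p_1$; the converse is handled cleanly by exhibiting a triangle in each of the two failure cases ($k\ge 3$, or $k=2$ with both exponents composite). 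One could quibble that your triangle constructions should spell out that an element of order $p_i^2$ exists whenever $\exp(\mathbf P_i)$ is not prime (immediate, since the Sylow subgroups are nontrivial), but this is routine. Your argument is essentially the standard one and matches the spirit of the adjacency criterion the paper later proves as Lemma~\ref{susednost}.
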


The {\bf order sequence} of a finite group $\mathbf G$ is the list of orders of elements of the group, arranged in non-decreasing order.

The next lemma is a well known statement from group theory. For a proof of the statement and explanation of the connection between the order sequence of a group and some graphs associated to groups, we refer the reader to \cite{niz redova}.

\begin{proposition}\label{niz redova}
Two finite abelian groups have the same order sequences if and only if they are isomorphic.
\end{proposition}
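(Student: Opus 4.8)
The plan is to prove the nontrivial implication; the converse is immediate, since any group isomorphism preserves the order of every element and hence carries one order sequence onto the other. So suppose $\mathbf G$ and $\mathbf H$ are finite abelian groups with the same order sequence. First I would note that the length of the order sequence equals $|G|=|H|$, so the two groups have the same order and in particular the same prime divisors $p_1,\dots,p_r$ with the same multiplicities. By the structure theorem for finite abelian groups, each of $\mathbf G$ and $\mathbf H$ is the direct product of its Sylow subgroups, so it suffices to show that the order sequence determines each Sylow subgroup up to isomorphism.

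To reduce to the $p$-group case, I would use that when $\mathbf G=\mathbf P_1\times\cdots\times\mathbf P_r$ with $\mathbf P_i$ the Sylow $p_i$-subgroup, an element $g=(g_1,\dots,g_r)$ has order $\prod_i o(g_i)$, because the $o(g_i)$ are pairwise coprime. Consequently the elements of $\mathbf G$ whose order is a power of a fixed prime $p$ are exactly those supported on the $p$-component, and for every $i$ the number of elements of $\mathbf G$ of order exactly $p^i$ equals the number of such elements in the Sylow $p$-subgroup. Since the order sequence records, for each $n$, how many elements have order $n$, this count is read off directly; it therefore remains to prove that, for a finite abelian $p$-group, the numbers of elements of orders $p^0,p^1,p^2,\dots$ determine the group up to isomorphism.

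For the $p$-group step, write the group (up to isomorphism) as $\mathbb Z_{p^{\lambda_1}}\times\cdots\times\mathbb Z_{p^{\lambda_k}}$ with $\lambda_1\ge\cdots\ge\lambda_k$, so that the isomorphism type is encoded by the partition $\lambda$. The number of elements of order dividing $p^i$, i.e.\ the number of solutions of $x^{p^i}=e$, equals $p^{\,f(i)}$ where $f(i)=\sum_j\min(i,\lambda_j)$, and $f(i)$ is recovered from the order sequence as the running sum of the counts of elements of order $p^0,\dots,p^i$. The final and, I expect, crucial step is to recover $\lambda$ from $f$: taking successive differences gives $f(i)-f(i-1)=|\{\,j:\lambda_j\ge i\,\}|$, which is precisely the $i$-th part of the conjugate partition $\lambda'$. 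Since $f$ thus determines $\lambda'$, and conjugation is an involution on partitions, $f$ determines $\lambda$ and hence the isomorphism type of the $p$-group.

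The main obstacle is precisely this last injectivity claim, that the solution-counting function $f$ pins down the partition; everything else is bookkeeping built on the primary decomposition. Once the $p$-group case is settled, the order sequence determines the type of every Sylow subgroup of $\mathbf G$ and of $\mathbf H$, these types agree pairwise, and so $\mathbf G\cong\mathbf H$.
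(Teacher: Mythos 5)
Your proof is correct. Note that the paper itself does not prove this proposition; it states it as a known fact and refers the reader to Cameron and Dey's paper on order sequences, so there is no internal argument to compare against. Your route is the standard one: reduce to Sylow $p$-subgroups via the primary decomposition (using that orders multiply across coprime components), then observe that the running sums of the order counts give $f(i)=\sum_j\min(i,\lambda_j)$, whose successive differences $f(i)-f(i-1)=|\{j:\lambda_j\ge i\}|$ recover the conjugate partition and hence the partition $\lambda$. All steps check out, including the key computation that the number of solutions of $x^{p^i}=e$ in $\mathbf C_{p^{\lambda_j}}$ is $p^{\min(i,\lambda_j)}$; this is a complete and self-contained proof of a statement the paper only cites.
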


We will also use a following result from group theory. A proof can be seen in \cite{nas prvi}, as a part of the proof of Proposition 5.1.

\begin{lemma}[\cite{niz redova}]\label{ciklicne}
Let $p$ be a prime number and $$\mathbf G=\mathbf C_{p^{r_1}} \times \dots\times \mathbf C_{p^{r_k}}, \; r_1\geq r_2\geq \dots \geq r_k.$$
\begin{enumerate}
\item $\mathbf G$ has exactly $\frac{p^k-1}{p-1}$ subgroups of order $p$.
\item If $\mathbf H$ is a non-maximal cyclic subgroup of the group $\mathbf G$ of order $p^u$, $u>1$, then $H$ is contained in exactly $p^{k-1}$ cyclic subgroups of the group $\mathbf G$ of order $p^{u+1}$.
\end{enumerate}
\end{lemma}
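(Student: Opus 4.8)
The plan is to treat the two parts separately, both by elementary counting inside the abelian $p$-group $\mathbf G$. Throughout I write $G[p]=\{x\in G:x^p=e\}$ for the set of elements of order dividing $p$, which is a subgroup since $\mathbf G$ is abelian.

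For part (1), I would first note that, under the decomposition $\mathbf G=\mathbf C_{p^{r_1}}\times\dots\times\mathbf C_{p^{r_k}}$, an element lies in $G[p]$ exactly when each of its coordinates does. Since each cyclic factor $\mathbf C_{p^{r_i}}$ with $r_i\ge 1$ has precisely $p$ elements of order dividing $p$, this gives $|G[p]|=p^k$, and $G[p]$ is elementary abelian, i.e.\ a $k$-dimensional vector space over $\mathbb F_p$. The subgroups of $\mathbf G$ of order $p$ are then exactly the one-dimensional subspaces of $G[p]$. As the $p^k-1$ nonzero vectors split into such lines, each accounting for $p-1$ of them, the number of subgroups of order $p$ is $(p^k-1)/(p-1)$, as claimed.

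For part (2), I would fix a generator $a$ of $H$, so $o(a)=p^u$, and reduce the count to the solution set $S=\{b\in G:b^p=a\}$. The key observation is that a cyclic subgroup $K=\langle b\rangle$ of order $p^{u+1}$ contains $H$ if and only if $H$ is the unique subgroup of order $p^u$ of $K$, i.e.\ $\langle b^p\rangle=H$. Every $b\in S$ has order $p^{u+1}$ (since $o(b^p)=o(a)=p^u$) and generates such a subgroup, so $b\mapsto\langle b\rangle$ defines a map from $S$ to the set $\mathcal K$ of cyclic subgroups of order $p^{u+1}$ containing $H$. I would then verify that this map is surjective, by showing that any $K\in\mathcal K$ has a generator with $p$-th power equal to $a$ (adjust an arbitrary generator $c$, for which $c^p=a^t$ with $\gcd(t,p)=1$, by the exponent $t^{-1}\bmod p^u$), and that each fibre has size exactly $p$: the generators $b^s$ of a fixed $K$ satisfying $(b^s)^p=a^s=a$ are those with $s\equiv 1\pmod{p^u}$, of which there are exactly $p$ among the residues prime to $p$ modulo $p^{u+1}$. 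This yields $|\mathcal K|=|S|/p$.

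It then remains to compute $|S|$. Since $\mathbf G$ is abelian, $S$ is the preimage of $a$ under the $p$-th power homomorphism, hence either empty or a coset of its kernel $G[p]$, so $|S|\in\{0,p^k\}$. Because $H$ is non-maximal it lies in a cyclic subgroup of order $p^{u+1}$, and the surjectivity argument above then produces an actual $p$-th root of $a$; thus $S\neq\emptyset$ and $|S|=p^k$, giving $|\mathcal K|=p^k/p=p^{k-1}$. The order computations and the coset description of $S$ are routine; the step that will demand the most care is the fibre analysis of the map $S\to\mathcal K$, where one must handle the exponents prime to $p$ and the congruence $s\equiv 1\pmod{p^u}$ correctly, and where the non-maximality hypothesis on $H$ is exactly what guarantees $S$ is nonempty.
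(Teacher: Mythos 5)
Your proof is correct. Note that the paper itself gives no proof of this lemma: it is quoted from the literature (attributed to Cameron--Dey's paper on order sequences, with a proof indicated inside Proposition 5.1 of the cited paper of Zahirovi\'c, Bo\v snjak and Madar\'asz), so there is no in-paper argument to compare against. Your two counting arguments are sound and self-contained: part (1) is the standard identification of the order-$p$ subgroups with the lines of the elementary abelian socle $G[p]\cong\mathbb F_p^k$; part (2) correctly reduces the count to the solution set $S=\{b:b^p=a\}$, which is a coset of $G[p]$ once nonempty, and the fibre analysis of $b\mapsto\langle b\rangle$ is handled properly --- the fibre over $K$ is exactly $\{x\in K:x^p=a\}$, which has $p$ elements, since every such $x$ automatically has order $p^{u+1}$ and hence generates $K$. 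The non-maximality hypothesis is used exactly where it is needed, to guarantee that $H$ sits inside some cyclic subgroup of order $p^{u+1}$ and hence that $S\neq\emptyset$. (As a side remark, your argument only needs $u\geq 1$, not $u>1$; the restriction in the statement is there because the $u=1$ case interacts with part (1), where a subgroup of order $p$ can lie in several subgroups of order $p^2$ in the same way, so nothing is lost.)
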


\section{The main results}

It seems natural to ask whether the difference graph $\mathcal D(\mathbf G)$ (with non-empty set of vertices) uniquely determines the finite group $\mathbf G$. As the following example shows, the answer is negative. 

\begin{example}
The dihedral group $\mathbf D_6$, the dyciclic group $\mathbf Q_{12}$, and the cyclic group $\mathbf C_6$ have difference graphs isomorphic to the path of length $2$. Groups $\mathbf D_6$ and $\mathbf Q_{12}$ are non-abelian groups of order 12, and $C_6$ is, of course, an abelian group of order 6.
\end{example}

The question still can be posed for finite abelian groups. Since the vertex set of the difference graph of an abelian $p$-group is empty,   
we will concentrate to the case $\mathbf G=\mathbf P \times \mathbf Q$, where $\mathbf P$ and $\mathbf Q$ are abelian $p$-group and abelian $q$-group, respectively, where $p$ and $q$ are two different primes.

In this section, we will prove that $\mathcal D(\mathbf P \times \mathbf Q)$ uniquely determines the group $\mathbf G=\mathbf P \times \mathbf Q$. In order to do this, we will first study some properties of neighborhood classes of $\mathcal D(\mathbf P \times \mathbf Q)$ and use obtained results to determine some basic parameters of $\mathbf P$ and $\mathbf Q$. Then we will make use of some maximal complete bipartite subgraphs of $\mathcal D(\mathbf P \times \mathbf Q)$ to prove the main result of the paper.

In the further text, let $\mathbf P=\mathbf C_{p^{r_1}} \times \dots \times\mathbf C_{p^{r_k}}$ and $\mathbf Q=\mathbf C_{q^{t_1}} \times \dots \times\mathbf C_{q^{t_l}}$, where $r_1\geq r_2\geq \dots \geq r_k$ and $t_1\geq t_2\geq \dots \geq t_l$. Also, put $r_1+ \dots +r_k=n$ and $t_1+ \dots +t_l=m$.

First we will resolve a question when two vertices are adjacent in $\mathcal D(\mathbf G)$.

\begin{lemma}\label{susednost}
Let $a_1, a_2 \in P$ and $b_1, b_2 \in Q$. Then $(a_1,b_1)$ and $(a_2,b_2)$ are adjacent in $\mathcal D(\mathbf G)$ if and only if $a_1\twoheadrightarrow a_2$ and $b_2\twoheadrightarrow b_1$ or $a_2\twoheadrightarrow a_1$ and $b_1\twoheadrightarrow b_2$.
\end{lemma}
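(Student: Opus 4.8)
The plan is to split the difference into its two constituent relations --- adjacency in the enhanced power graph and adjacency in the power graph --- characterise each coordinatewise, and then take the set difference of the resulting edge conditions. Since $\mathbf P$ is a $p$-group and $\mathbf Q$ a $q$-group with $p\neq q$, orders of elements of $P$ are powers of $p$ and orders of elements of $Q$ are powers of $q$, so $\gcd(o(g),o(h))=1$ for all $g\in P$ and $h\in Q$. Proposition~\ref{proizvod} then gives $\mathcal P_e(\mathbf G)=\mathcal P_e(\mathbf P)\boxtimes\mathcal P_e(\mathbf Q)$. Unwinding the definition of the strong product, for distinct vertices we obtain $(a_1,b_1)\sime(a_2,b_2)$ if and only if ($a_1\sime a_2$ or $a_1=a_2$) and ($b_1\sime b_2$ or $b_1=b_2$). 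Here I would record the key simplification that inside a $p$-group the enhanced power graph coincides with the power graph: if two elements generate a cyclic subgroup, that subgroup is a cyclic $p$-group and hence has a chain of subgroups, so one element is a power of the other. Therefore ``$a_1\sime a_2$ or $a_1=a_2$'' is equivalent to ``$a_1\rightarrow a_2$ or $a_2\rightarrow a_1$'', and similarly for $b_1,b_2$.

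Next I would describe adjacency in the power graph $\mathcal P(\mathbf G)$. Because $o(a_1)$ and $o(b_1)$ are coprime, the Chinese remainder theorem yields $\langle(a_1,b_1)\rangle=\langle a_1\rangle\times\langle b_1\rangle$, so $(a_2,b_2)$ is a power of $(a_1,b_1)$ exactly when $a_2\in\langle a_1\rangle$ and $b_2\in\langle b_1\rangle$, that is, when $a_1\rightarrow a_2$ and $b_1\rightarrow b_2$. Hence $(a_1,b_1)\simp(a_2,b_2)$ if and only if $a_1\rightarrow a_2$ and $b_1\rightarrow b_2$, or $a_2\rightarrow a_1$ and $b_2\rightarrow b_1$.

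To finish, I would combine the two: $(a_1,b_1)$ and $(a_2,b_2)$ are adjacent in $\mathcal D(\mathbf G)$ precisely when they are adjacent in $\mathcal P_e(\mathbf G)$ but not in $\mathcal P(\mathbf G)$, the deletion of isolated vertices being irrelevant to the adjacency of a pair that is in fact adjacent. Classifying each coordinate according to whether $a_1=a_2$, $a_1\twoheadrightarrow a_2$, or $a_2\twoheadrightarrow a_1$ (and the analogous trichotomy for $b_1,b_2$), the enhanced condition admits nine combinations, and a short Boolean check discards the seven that also satisfy the power-graph condition. The two survivors are exactly $a_1\twoheadrightarrow a_2$ together with $b_2\twoheadrightarrow b_1$, and $a_2\twoheadrightarrow a_1$ together with $b_1\twoheadrightarrow b_2$, which is the asserted condition; in both cases the two vertices differ, so no separate check against loops is needed.

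I do not anticipate a genuine obstacle: the argument is essentially bookkeeping built on Proposition~\ref{proizvod} and the Chinese remainder theorem. The one place that rewards care is getting the directions right when subtracting the power-graph edges, since the surviving edges are precisely those in which the two coordinates are strictly comparable in \emph{opposite} directions. As a consistency check I would compare the outcome with Lemma~\ref{deljivost}: in each of the seven discarded cases one vertex order divides the other, whereas in the two surviving cases neither order divides the other, exactly as it must be.
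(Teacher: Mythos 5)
Your proposal is correct and rests on the same foundation as the paper's proof, namely Proposition~\ref{proizvod} and a coordinatewise analysis, but the route to eliminating the power-graph edges is different. The paper invokes Lemma~\ref{deljivost} to conclude that the orders $o(a_1)o(b_1)$ and $o(a_2)o(b_2)$ do not divide each other, deduces that the orders of the $P$-coordinates and of the $Q$-coordinates must be strictly comparable in opposite directions, and translates order comparison into $\twoheadrightarrow$; it also only writes out the forward implication, leaving the converse implicit. You instead compute $\langle(a,b)\rangle=\langle a\rangle\times\langle b\rangle$ via the Chinese remainder theorem, characterise power-graph adjacency in the product directly, and perform the set-theoretic subtraction explicitly, which gives you both directions of the equivalence at once and makes the proof self-contained (not relying on the cited Lemma~\ref{deljivost} except as a sanity check). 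One small bookkeeping point: your per-coordinate ``trichotomy'' $a_1=a_2$, $a_1\twoheadrightarrow a_2$, $a_2\twoheadrightarrow a_1$ is not exhaustive among comparable pairs --- the case $\langle a_1\rangle=\langle a_2\rangle$ with $a_1\neq a_2$ (each a power of the other) is missing, so there are sixteen combinations rather than nine; but since that case behaves exactly like equality for both the enhanced power graph and the power graph, the extra combinations are discarded for the same reason and the conclusion is unaffected.
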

\begin{proof}
If $(a_1,b_1)$ and $(a_2,b_2)$ are adjacent in $\mathcal D(\mathbf G)$, then they are adjacent in $\mathcal P_e(\mathbf G)$, and according to Proposition \ref{proizvod}, $a_1$ and $a_2$ generate a cyclic group, as well as $b_1$ and $b_2$. According to Lemma \ref{deljivost}, $o(a_1,b_1)=o(a_1)\cdot o(b_1)$ and $o(a_2,b_2)=o(a_2)\cdot o(b_2)$ do not divide each other. Therefore, $a_1$ and $a_2$, and $b_1$ and $b_2$ also, cannot be of the same order. Moreover, if $o(a_1)<o(a_2)$, than $o(b_1)>o(b_2)$, and conversely, if $o(a_1)>o(a_2)$, then $o(b_1)<o(b_2)$. In a cyclic group of a prime power order, $o(x)>o(y)$ is equivalent to $x\twoheadrightarrow y$, so the proof has been completed.
\end{proof}\\

By $M(\mathbf K)$ we will denote the set of all generators of maximal cyclic subgroups of a finite group $\mathbf K$.

\begin{lemma}\label{cvorovi diferenc grafa}
Let $a \in P$ and $b \in Q$. Then $(a,b)$ is not a vertex of $\mathcal D(\mathbf G)$ if and only if $a=b=e$ or $a\in M(\mathbf P)$ and $b\in M(\mathbf Q)$.
\end{lemma}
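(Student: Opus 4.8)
The plan is to observe that, since $\mathcal D(\mathbf G)$ is by definition the difference of the two graphs with all isolated vertices removed, an element $(a,b)\in G$ fails to be a vertex of $\mathcal D(\mathbf G)$ precisely when it has no neighbor there. Thus the whole statement reduces to characterizing those $(a,b)$ that admit no adjacent vertex, and for adjacency I would invoke Lemma \ref{susednost}.

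First I would record two elementary existence facts in the $p$-group $\mathbf P$ (and symmetrically in $\mathbf Q$). Using that in a cyclic group of prime power order $x\twoheadrightarrow y$ is equivalent to $\langle y\rangle\subsetneq\langle x\rangle$, one checks that there is an $a'\in P$ with $a\twoheadrightarrow a'$ if and only if $a\neq e$ (take $a'=e$), and that there is an $a'\in P$ with $a'\twoheadrightarrow a$ if and only if $a\notin M(\mathbf P)$ (a cyclic subgroup properly containing $\langle a\rangle$ exists exactly when $\langle a\rangle$ is not maximal cyclic). The analogous equivalences hold verbatim in $\mathbf Q$.

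Next I would feed these into Lemma \ref{susednost}. Since the two coordinates of a candidate neighbor $(a',b')$ may be chosen independently, a neighbor in the first form ($a\twoheadrightarrow a'$, $b'\twoheadrightarrow b$) exists exactly when $a\neq e$ and $b\notin M(\mathbf Q)$, while a neighbor in the second form ($a'\twoheadrightarrow a$, $b\twoheadrightarrow b'$) exists exactly when $a\notin M(\mathbf P)$ and $b\neq e$. Hence $(a,b)$ is a vertex of $\mathcal D(\mathbf G)$ if and only if $(a\neq e\ \wedge\ b\notin M(\mathbf Q))\ \vee\ (a\notin M(\mathbf P)\ \wedge\ b\neq e)$, so that $(a,b)$ is \emph{not} a vertex if and only if the negation $(a=e\ \vee\ b\in M(\mathbf Q))\ \wedge\ (a\in M(\mathbf P)\ \vee\ b=e)$ holds.

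The last step is purely Boolean: distributing this conjunction produces four cases, of which two, namely $a=e\wedge a\in M(\mathbf P)$ and $b=e\wedge b\in M(\mathbf Q)$, are vacuous because $\mathbf P$ and $\mathbf Q$ are nontrivial, whence $e\notin M(\mathbf P)$ and $e\notin M(\mathbf Q)$ (the trivial subgroup is properly contained in a nontrivial cyclic subgroup and so is never maximal cyclic). The two surviving cases are exactly $a=b=e$ and $a\in M(\mathbf P)\wedge b\in M(\mathbf Q)$, which is the assertion. I expect the only real care to be needed in this final bookkeeping, specifically in justifying that the identity lies in neither $M(\mathbf P)$ nor $M(\mathbf Q)$ so that the degenerate cross-terms drop out; the rest is a direct reading of Lemma \ref{susednost}.
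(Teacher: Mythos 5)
Your argument is correct and matches the paper's approach: the paper proves this lemma simply as ``a direct consequence of Lemma \ref{susednost},'' and your write-up is exactly the careful expansion of that one-line proof, including the needed observation that $e\notin M(\mathbf P)\cup M(\mathbf Q)$ when the Sylow subgroups are nontrivial.
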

\begin{proof}
A direct consequence of a Lemma \ref{susednost}.
\end{proof}\\

By $N(x)$ we will denote the {\bf neighborhood} of a vertex $x$ (the set of all neighbors of $x$ in a given graph). The {\bf neighborhood class} of a vertex $x$ is the set of all vertices who have the same neighborhood as $x$. We will denote it by $[x]$.

Now we wish to describe neighborhood classes in difference graphs of a group $\mathbf P\times\mathbf Q$, where $\mathbf P$ and $\mathbf Q$ are defined above. 

\begin{lemma}\label{srednje klase}
Let $a \in P$ and $b \in Q$, where $a,b \neq e$, $a\notin M(\mathbf P)$, $b\notin M(\mathbf Q)$.
Then $[(a,b)]=\{(x,y) :  \langle a\rangle = \langle x\rangle, \; \langle b\rangle = \langle y\rangle \}$.
\end{lemma}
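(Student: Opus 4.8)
The plan is to prove the two inclusions separately. For the inclusion $\supseteq$, suppose $\langle x\rangle=\langle a\rangle$ and $\langle y\rangle=\langle b\rangle$. The key observation is that the relation $\twoheadrightarrow$ depends only on the cyclic subgroup generated: for any $c\in P$ we have $c\twoheadrightarrow a$ exactly when $\langle a\rangle\subsetneq\langle c\rangle$, and $a\twoheadrightarrow c$ exactly when $\langle c\rangle\subsetneq\langle a\rangle$. Hence $\langle x\rangle=\langle a\rangle$ makes $x$ and $a$ interchangeable in every instance of the criterion of Lemma \ref{susednost}, and likewise $y$ and $b$, so $(x,y)$ and $(a,b)$ are adjacent to exactly the same vertices, i.e.\ $N((x,y))=N((a,b))$. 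Since $\langle x\rangle=\langle a\rangle$ is neither trivial nor a maximal cyclic subgroup, we have $x\neq e$ and $x\notin M(\mathbf P)$, and similarly $y\neq e$, $y\notin M(\mathbf Q)$; by Lemma \ref{cvorovi diferenc grafa}, $(x,y)$ is a vertex of $\mathcal D(\mathbf G)$, so $(x,y)\in[(a,b)]$.

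For the inclusion $\subseteq$, the idea is to recover $\langle a\rangle$ and $\langle b\rangle$ separately from the neighbourhood by probing with vertices that have one coordinate equal to the identity. First I would determine the neighbours of $(a,b)$ whose second coordinate is $e$: using Lemma \ref{susednost} together with $b\neq e$, these are precisely the vertices $(c,e)$ with $\langle a\rangle\subsetneq\langle c\rangle$. Carrying out the same computation for an arbitrary $(x,y)\in[(a,b)]$ shows that its neighbours with second coordinate $e$ are the vertices $(c,e)$ with $\langle x\rangle\subsetneq\langle c\rangle$ when $y\neq e$, and that there are none when $y=e$. Since $a\notin M(\mathbf P)$ this set is non-empty, so equality of the neighbourhoods forces $y\neq e$ and
$$\{c\in P:\langle a\rangle\subsetneq\langle c\rangle\}=\{c\in P:\langle x\rangle\subsetneq\langle c\rangle\}.$$
Symmetrically, probing with neighbours whose first coordinate is $e$ yields $x\neq e$ together with the analogous equality relating $\langle b\rangle$ and $\langle y\rangle$.

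It then remains to deduce $\langle a\rangle=\langle x\rangle$ from the equality of these two families of strict cyclic over-groups, and this purely group-theoretic step is where the argument really bites. The displayed equality says that the cyclic subgroups strictly containing $\langle a\rangle$ are exactly those strictly containing $\langle x\rangle$, and this common family is non-empty because $a\notin M(\mathbf P)$. Choosing any cyclic subgroup $C$ in it, both $\langle a\rangle$ and $\langle x\rangle$ lie in $C$; since the subgroups of the cyclic $p$-group $C$ are totally ordered by inclusion, $\langle a\rangle$ and $\langle x\rangle$ are comparable. If the inclusion were strict, say $\langle a\rangle\subsetneq\langle x\rangle$, then $\langle x\rangle$ would be a cyclic subgroup strictly containing $\langle a\rangle$ yet not strictly containing $\langle x\rangle$, contradicting the equality of the two families; hence $\langle a\rangle=\langle x\rangle$, and likewise $\langle b\rangle=\langle y\rangle$. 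Together with the first inclusion this yields the claimed description of $[(a,b)]$. The main obstacle is organising the $\subseteq$ direction so that the two coordinates decouple: the device of looking at neighbours with a coordinate equal to $e$ is precisely what reduces the problem to the clean total-ordering argument above.
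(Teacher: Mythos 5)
Your proof is correct and follows essentially the same strategy as the paper's: the easy inclusion comes from observing that $\twoheadrightarrow$ depends only on the generated cyclic subgroup, and the converse is obtained by probing the neighbourhood with vertices having one coordinate equal to $e$ (or a generator of a maximal cyclic subgroup). The paper organizes the converse as a three-case analysis with explicit distinguishing neighbours, whereas you derive equality of the families of strict cyclic over-groups and conclude by the total ordering of subgroups of a cyclic $p$-group; this is only a minor repackaging of the same argument.
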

\begin{proof}
Let $S=\{(x,y) :  \langle a\rangle = \langle x\rangle, \; \langle b\rangle = \langle y\rangle \}$. It is clear that all vertices from $S$ have the same neighborhood as $(a,b)$. Namely, if $(x,y)\in S$, then $\{z\in Y : \; y\twoheadrightarrow z\}=\{z\in Y : \; b\twoheadrightarrow z\}$ and $\{z\in Y : \; z\twoheadrightarrow y\}=\{z\in Y : \; z\twoheadrightarrow b\}$, and the same holds for $a$ and $x$.\\
Let $N((a,b))=N((x,y))$. We will consider three cases.\\
1) Let $x$ and $a$ do not belong to the same cyclic subgroup of $\mathbf P$. Let $z$ be a generator of a maximal cyclic subgroup that contains $a$. Then $(z,e)\in N((a,b))$ and $(z,e) \notin N((x,y))$.\\
2) Let $a\twoheadrightarrow x$. Let $t$ be a generator of a maximal cyclic subgroup of $\mathbf Q$ that contains $b$.
Then $(x,t)\in N((a,b))$ and $(x,t) \notin N((x,y))$.\\
3) Let $x\twoheadrightarrow a$. Then $(x,e)\in N((a,b))$ and $(x,e) \notin N((x,y))$.\\
The only remaining possibility is that $\langle a\rangle = \langle x\rangle$. Analogously, $\langle b\rangle$ must be equal to $\langle y\rangle$.
\end{proof}\\

This type of neighborhood classes of a difference graph we will call $m${\bf -classes}. If $o(a)=p^u$ and $o(b)=q^v$, then $|[(a,b)]|=(p^u-p^{u-1})(q^v-q^{v-1})$, because $p^u-p^{u-1}$ is the number of generators of the group $\langle a\rangle$, and $q^v-q^{v-1}$ is the number of generators of the group $\langle b\rangle$.

\begin{lemma}\label{e klase}
Let $b\in Q$, $b\neq e$, $o(b)=q^u$. Then
$[(e,b)]=\{(e,y) : o(y)=q^u \; \mbox{i} \; | \langle b\rangle \cap \langle y\rangle |\geq q^{u-1} \}$.
\end{lemma}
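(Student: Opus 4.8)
The plan is to read off $N((e,b))$ directly from Lemma~\ref{susednost} and then verify the two inclusions that define the neighborhood class $[(e,b)]$. Applying Lemma~\ref{susednost} to $(e,b)$ and an arbitrary vertex $(a_2,b_2)$, the alternative ``$e\twoheadrightarrow a_2$'' can never hold (the only power of $e$ is $e$, and $e$ is itself a power of $e$), while ``$a_2\twoheadrightarrow e$'' holds precisely when $a_2\neq e$. Hence
\[
N((e,b))=\{(a_2,b_2): a_2\neq e,\ \langle b_2\rangle\subsetneq\langle b\rangle\}.
\]
In the cyclic group $\langle b\rangle$ of order $q^u$ the second coordinates range exactly over $\langle b^q\rangle$, the unique subgroup of order $q^{u-1}$, and the first coordinates range over all of $P\setminus\{e\}$.

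For the easy inclusion I take $y$ with $o(y)=q^u$ and $|\langle b\rangle\cap\langle y\rangle|\ge q^{u-1}$, and observe that this intersection bound forces the unique order-$q^{u-1}$ subgroups of $\langle b\rangle$ and $\langle y\rangle$ to coincide, i.e. $\langle y^q\rangle=\langle b^q\rangle$. The computation above, applied to $(e,y)$, then gives $N((e,y))=N((e,b))$, so $(e,y)\in[(e,b)]$; note $(e,y)$ is indeed a vertex by Lemma~\ref{cvorovi diferenc grafa} since $e\notin M(\mathbf P)$.

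The converse is the crux. Suppose $(x,z)$ is a vertex with $N((x,z))=N((e,b))$; I first show $x=e$. The key remark is that, whatever $z$ is, if $x\neq e$ then $x$ never occurs as the first coordinate of a neighbor of $(x,z)$: by Lemma~\ref{susednost} every neighbor $(a_2,b_2)$ satisfies $x\twoheadrightarrow a_2$ or $a_2\twoheadrightarrow x$, and each of these relations forces $a_2\neq x$. On the other hand $x\in P\setminus\{e\}$ does occur as a first coordinate of $N((e,b))$, witnessed by the vertex $(x,e)$. This contradiction yields $x=e$. This single observation replaces the case split according to whether $z$ generates a maximal cyclic subgroup, and setting it up cleanly is the step I expect to be the main obstacle.

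Once $x=e$, the same computation gives $N((e,z))=(P\setminus\{e\})\times\{b_2:\langle b_2\rangle\subsetneq\langle z\rangle\}$, and comparing this with the analogous product form $N((e,b))=(P\setminus\{e\})\times\langle b^q\rangle$ reduces the equality of neighborhoods to $\{b_2:\langle b_2\rangle\subsetneq\langle z\rangle\}=\langle b^q\rangle$. Comparing cardinalities forces $o(z)=q^u$, whence the left-hand set is $\langle z^q\rangle$ and the equality becomes $\langle z^q\rangle=\langle b^q\rangle$; finally this coincidence of the order-$q^{u-1}$ subgroups is exactly the condition $|\langle b\rangle\cap\langle z\rangle|\ge q^{u-1}$. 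Thus $(e,z)$ lies in the claimed set, which completes the description of $[(e,b)]$. Besides the first-coordinate argument, the only delicate points are the vacuity of $e\twoheadrightarrow(\cdot)$ and the translation between the subgroup equality and the intersection bound.
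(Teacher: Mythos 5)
Your proof is correct and follows essentially the same route as the paper's: you read adjacency off Lemma~\ref{susednost}, use the witness $(x,e)\in N((e,b))\setminus N((x,z))$ to force $x=e$, and then identify the condition $o(y)=q^u$, $|\langle b\rangle\cap\langle y\rangle|\ge q^{u-1}$ with the equality of the unique maximal subgroups $\langle y^q\rangle=\langle b^q\rangle$. The only difference is presentational: you make the neighborhood explicit as the product $(P\setminus\{e\})\times\langle b^q\rangle$, which slightly streamlines the case analysis but does not change the argument.
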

\begin{proof}
Let $S=\{(e,y) : o(y)=q^u \; \mbox{i} \; | \langle b\rangle \cap \langle y\rangle |\geq q^{u-1} \}$. It is clear that all vertices from $S$ have the same neighborhood as $(e,b)$. Namely, if $(e,y)\in S$, then $\{z\in Q : \; y\twoheadrightarrow z\}=\{z\in Q : \; b\twoheadrightarrow z\}$.
\\Let $N((e,b))=N((x,y))$ and $x\neq e$. Then $(x,e)\in N((e,b))$ and $(x,e) \notin N((x,y))$. So, it must be $x=e$.
Now, let $N((e,b))=N((e,y))$. Then $\{z\in Q : \; y\twoheadrightarrow z\}=\{z\in Q : \; b\twoheadrightarrow z\}$, which means that $b$ and $y$ have the same cyclic subgroups. Therefore, $b$ and $y$ have the same order, and either $\langle b\rangle = \langle y\rangle$, or $\langle b\rangle$ i $\langle y\rangle$ have a common cyclic subgroup of order $q^{u-1}$.
This means that $o(y)=q^u$ i $| \langle b\rangle \cap \langle y\rangle |\geq q^{u-1}$.
\end{proof}\\

This type of neighborhood classes of a difference graph we will call $l${\bf -classes}. If $o(b)=q$, then there exist $q^{l}-1$ elemenats $y\in Q$ of order $q$, and for all of them it holds $N((e,b))=N((e,y))$. This means that $|[(e,b)]|=q^{l}-1$.
If $o(b)=q^u$, $u\geq 2$, then, according to Lemma \ref{ciklicne}, there exist $q^{l-1}$ cyclic subgroups of the group $Q$ which have the intersection of cardinality $q^{u-1}$ or $q^u$ with $\langle b\rangle$. This implies $|[(e,b)]|=q^{l-1}(q^u-q^{u-1})$.

\begin{lemma}\label{maks klase}
Let $a \in P$ and $b \in Q$, where $a,b \neq e$, $a\notin M(\mathbf P)$,  $o(b)=q^u$ and $b\in M(\mathbf Q)$.
Then $[(a,b)]=\{(x,y) :  \langle a\rangle = \langle x\rangle, \; o(y)=q^u, \; | \langle b\rangle \cap \langle y\rangle |\geq q^{u-1} \; \mbox{i} \; y\in M(\mathbf Q) \}$.
\end{lemma}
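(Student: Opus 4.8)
The plan is to prove the two inclusions of the set equality separately, as in the proofs of Lemmas \ref{srednje klase} and \ref{e klase}, but exploiting that $b\in M(\mathbf Q)$ collapses the neighborhood of $(a,b)$ to a single ``product-type'' family. Indeed, by Lemma \ref{susednost} together with the fact that $b$ generates a maximal cyclic subgroup (so no $w$ satisfies $\langle b\rangle\subsetneq\langle w\rangle$), one of the two adjacency alternatives is vacuous, and $N((a,b))=\{(z,w):\langle a\rangle\subsetneq\langle z\rangle,\ \langle w\rangle\subsetneq\langle b\rangle\}$. For the easy inclusion I would take $(x,y)$ in the claimed set $S$ and verify $N((x,y))=N((a,b))$. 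The condition $\langle x\rangle=\langle a\rangle$ makes the first coordinates interchangeable, and since $o(y)=q^u$, $y\in M(\mathbf Q)$ and $|\langle b\rangle\cap\langle y\rangle|\ge q^{u-1}$, the sets $\{w:\langle w\rangle\subsetneq\langle y\rangle\}$ and $\{w:\langle w\rangle\subsetneq\langle b\rangle\}$ coincide: each is the underlying set of the unique subgroup of order $q^{u-1}$, and the intersection hypothesis forces these two order-$q^{u-1}$ subgroups of $\langle b\rangle$ and $\langle y\rangle$ to be equal. Because $y\in M(\mathbf Q)$, the neighborhood $N((x,y))$ is again of product type, and the two descriptions match.

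For the reverse inclusion I would assume $N((x,y))=N((a,b))$ and first analyze the second coordinate. Fix a generator $z$ of a maximal cyclic subgroup of $\mathbf P$ containing $a$ (available since $a\notin M(\mathbf P)$, so $\langle a\rangle\subsetneq\langle z\rangle$) and a $w_0\in\langle b\rangle$ generating the subgroup of order $q^{u-1}$ (with $w_0=e$ if $u=1$). Forcing the neighbors $(z,e)$ and $(z,w_0)$ of $(a,b)$ into $N((x,y))$ yields, via the adjacency dichotomy and the maximality of $z$, that $y\neq e$ and $\langle w_0\rangle\subsetneq\langle y\rangle$; hence $o(y)\ge q^u$ and $|\langle b\rangle\cap\langle y\rangle|\ge q^{u-1}$. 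To exclude $o(y)>q^u$, I would take a generator $w_1$ of a proper subgroup of $\langle y\rangle$ of order $q^u$: then $(z,w_1)\in N((x,y))$, but $o(w_1)=q^u=o(b)$ rules out $\langle w_1\rangle\subsetneq\langle b\rangle$, so $(z,w_1)\notin N((a,b))$, a contradiction. Thus $o(y)=q^u$.

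Next I would pin down $\langle x\rangle=\langle a\rangle$ by the three-case scheme of Lemma \ref{srednje klase}: if $x$ and $a$ lie in no common cyclic subgroup, $(z,e)$ separates the neighborhoods; if $\langle x\rangle\subsetneq\langle a\rangle$, then $(a,w_0)\in N((x,y))\setminus N((a,b))$; and if $\langle a\rangle\subsetneq\langle x\rangle$, then $(x,w_0)\in N((a,b))\setminus N((x,y))$ (the last two because equal first coordinates preclude adjacency). This leaves $\langle x\rangle=\langle a\rangle$, so in particular $x\neq e$. Finally, to see $y\in M(\mathbf Q)$, suppose $\langle y\rangle\subsetneq\langle y''\rangle$ for some $y''$; then $(e,y'')\in N((x,y))$, while $b\in M(\mathbf Q)$ and $o(y'')>q^u=o(b)$ prevent $\langle b\rangle\subseteq\langle y''\rangle$, so $(e,y'')\notin N((a,b))$, a contradiction. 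Hence $(x,y)\in S$, completing the inclusion $[(a,b)]\subseteq S$.

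The hard part, and the genuinely new feature relative to the balanced Lemma \ref{srednje klase}, is the second-coordinate analysis: one must simultaneously extract that $o(y)=q^u$, that $y$ generates a \emph{maximal} cyclic subgroup, and that $\langle y\rangle$ meets $\langle b\rangle$ in the full subgroup of order $q^{u-1}$. The lever throughout is the maximality $b\in M(\mathbf Q)$, which both kills one adjacency alternative and, through the order comparison $o(y'')>o(b)$, is exactly what forbids the troublesome candidate neighbor $(e,y'')$. Keeping the order bookkeeping precise — in particular distinguishing $\langle b\rangle\subseteq\langle y''\rangle$ from $\langle b\rangle\subsetneq\langle y''\rangle$, and handling the degenerate case $u=1$ where $w_0=e$ — is where the care is needed.
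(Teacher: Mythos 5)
Your proposal is correct and follows essentially the same strategy as the paper: verify the easy inclusion directly, then, assuming $N((x,y))=N((a,b))$, produce explicit separating neighbors to force each of the four defining conditions of the class. The only organizational difference is that the paper imports the case eliminations (ruling out $m$-classes, $l$-classes, and the case $x\in M(\mathbf P)$) from Lemmas \ref{srednje klase} and \ref{e klase}, whereas you re-derive them by hand with concrete witnesses such as $(z,e)$, $(z,w_1)$ and $(e,y'')$; your order bookkeeping, including the degenerate case $u=1$, checks out.
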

\begin{proof}
Let
$S=\{(x,y) :  \langle a\rangle = \langle x\rangle, \; o(y)=q^u, \; | \langle b\rangle \cap \langle y\rangle |\geq q^{u-1} \; \mbox{i} \; y\in M(\mathbf Q) \}$. It is clear that all vertices from $S$ have the same neighborhood as $(a,b)$. Namely, if $(x,y)\in S$, then $\{z\in Y : \; y\twoheadrightarrow z\}=\{z\in Y : \; b\twoheadrightarrow z\}$ i \\$\{z\in Y : \; z\twoheadrightarrow y\}=\{z\in Y : \; z\twoheadrightarrow b\}=\emptyset$.\\
Let $N((x,y))=N((a,b))$. According to Lemmas \ref{srednje klase} i \ref{e klase}, exactly one of the elements $x$ i $y$ generates a maximal cyclic subgroup in its group, and the other can not be equal to $e$. Let $x$ generates a maximal cyclic subgroup in $P$ and let $t$ be a generator of a maximal cyclic subgroup that contains $y$. Then $(e,t)\in N((x,y))$ and $(e,t) \notin N((a,b))$. Therefore, $y$ must generate a maximal cyclic subgroup in $Q$. But then $\langle b\rangle$ and $\langle y \rangle$ must contain the same cyclic subgroups, and $a$ and $x$ must be contained in the same cyclic subgroups. As in a proof of the previous  Proposition, this means that $o(y)=q^u$ and $| \langle b\rangle \cap \langle y\rangle |\geq q^{u-1}$, and that $a$ and $x$ generate the same cyclic subgroup.
\end{proof}\\

This type of neighborhood classes of a difference graph we will call $u${\bf -classes}. Cardinality of these classes does not only depend on orders of elements $a$ and $b$. So we will consider this question later.

The following statement deals with a special case which we want to exclude from further consideration.

\begin{lemma}\label{spec slucaj}
Let $\mathbf G=\mathbf P\times \mathbf Q$, where $\mathbf P=(\mathbf C_{p})^k$ and
$\mathbf Q=\mathbf C_{q^{t_1}} \times \dots \times\mathbf C_{q^{t_l}}$ and let $\mathbf H$ be an abelian group such that $\mathcal D(\mathbf G)\cong \mathcal D(\mathbf H)$. Then $\mathbf G\cong \mathbf H$.

\end{lemma}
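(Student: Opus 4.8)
The plan is to recover the isomorphism type of $\mathbf G=(\mathbf C_p)^k\times\mathbf Q$ entirely from graph-theoretic data of $\mathcal D(\mathbf G)$, and then to show that $\mathbf H$, having an isomorphic difference graph, is forced into the same form and yields the same data. First I would pin down the coarse shape of $\mathbf H$. Since $\mathbf P$ has prime exponent, Proposition \ref{bipartite D} gives that $\mathcal D(\mathbf G)$ is bipartite, hence so is $\mathcal D(\mathbf H)$. I would then observe that any abelian group with at least three distinct prime divisors has a non-bipartite difference graph: choosing $g_i$ of order $p_i$ for three distinct primes $p_1,p_2,p_3$ dividing $|H|$, the elements $g_1g_2$, $g_2g_3$, $g_3g_1$ together generate a cyclic subgroup yet have pairwise non-dividing orders, so they are pairwise adjacent in $\mathcal D(\mathbf H)$ and form a triangle. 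As $\mathcal D(\mathbf G)$ is nonempty while the difference graph of an abelian $p$-group is empty, $\mathbf H$ has exactly two prime divisors, $\mathbf H=\mathbf P'\times\mathbf Q'$; bipartiteness together with Proposition \ref{bipartite D} forces at least one factor to have prime exponent, so after relabelling $\mathbf H=(\mathbf C_{p'})^{k'}\times\mathbf Q'$.

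Next I would make the bipartite structure explicit. Specializing Lemma \ref{susednost} to $\mathbf P=(\mathbf C_p)^k$, every nonidentity element of $\mathbf P$ has order $p$, so $a_1\twoheadrightarrow a_2$ in $\mathbf P$ holds precisely when $a_1\ne e$ and $a_2=e$. Consequently adjacency in $\mathcal D(\mathbf G)$ reduces to: $(e,b)$ and $(a,b')$ are adjacent iff $a\ne e$ and $\langle b'\rangle\subsetneq\langle b\rangle$, giving the bipartition $X=\{(e,b):b\ne e\}$ and $Y=\{(a,b):a\ne e\}$. A direct count then yields that $(e,b)$ has degree $(p^k-1)q^{u-1}$ whenever $o(b)=q^u$, and that the set $R=\{(a,e):a\ne e\}$, of size $p^k-1$, is a single neighborhood class adjacent to all of $X$ (indeed it is exactly the set of $Y$-vertices dominating $X$). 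Using the neighborhood-class descriptions of Lemmas \ref{srednje klase}, \ref{e klase} and \ref{maks klase} I would confirm that the graph is connected, so that its bipartition is unique.

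With the structure in hand I would extract the invariants. The prime power $p^k$ is read off as $|R|+1$; the prime $q$ appears as the common ratio of successive distinct degrees on the $X$-side; and the multiplicity of each left-degree $(p^k-1)q^{u-1}$ equals the number of elements of order $q^u$ in $\mathbf Q$. This recovers the order sequence of $\mathbf Q$, hence $\mathbf Q$ itself by Proposition \ref{niz redova}, and therefore $\mathbf G$. Applying the identical reading to $\mathcal D(\mathbf H)$ then gives $\mathbf H\cong\mathbf G$.

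The main obstacle is the faithful identification of the two sides. The part $X$ is the ``pure $\mathbf Q$'' side and $Y$ the ``mixed'' side, and they are genuinely asymmetric in general; but in the degenerate cases where $\mathbf Q$ is cyclic or elementary abelian, both sides carry a dominating neighborhood class and the coarse data (the size $|R|$ and the two side sizes) become symmetric. Here a bipartition-preserving isomorphism and a bipartition-swapping isomorphism must both be admitted, and I would resolve the ambiguity by comparing the full degree and neighborhood-class profiles of the two sides: only one assignment of the primes $p,q$ and of the partition of $\mathbf Q$ is simultaneously consistent with every degree. This disambiguation, rather than any single computation, is the point demanding the most careful bookkeeping.
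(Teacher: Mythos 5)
Your overall strategy coincides with the paper's: use Proposition \ref{bipartite D} to force $\mathbf H$ into the form (prime-exponent group) $\times$ ($q$-group), identify the neighborhood class $\{(a,e):a\neq e\}$ as a class dominating the opposite partite set to read off $p^k$, and recover the order sequence of $\mathbf Q$ from the other side (you use degrees, the paper uses class cardinalities), finishing with Proposition \ref{niz redova}. The computations you do carry out --- $a_1\twoheadrightarrow a_2$ in $(\mathbf C_p)^k$ iff $a_1\neq e=a_2$, the degree formula $(p^k-1)q^{u-1}$, the triangle ruling out three prime divisors --- are all correct.

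The gap is in the step you yourself flag as delicate: telling the two sides apart. First, your claim that a dominating class occurs on the $X$-side only when $\mathbf Q$ is cyclic or elementary abelian is false. A vertex $(e,b)$ dominates $Y$ exactly when every element of $Q$ not generating a maximal cyclic subgroup lies properly in $\langle b\rangle$; this happens, for instance, for $\mathbf Q=\mathbf C_{q^2}\times\mathbf C_q$, where the elements outside $M(\mathbf Q)$ are precisely the elements of the unique subgroup of order $q$ shared by all cyclic subgroups of order $q^2$, so $(e,b)$ with $o(b)=q^2$ dominates $Y$. Second, your resolution of the ambiguity --- ``only one assignment of the primes and of the partition of $\mathbf Q$ is simultaneously consistent with every degree'' --- is asserted rather than proved, and it is exactly the content the lemma needs in these cases. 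The paper sidesteps all of this with a small but essential device: it compares only the \emph{smallest} neighborhood class on each side. The smallest $X$-class is $[(e,b)]$ with $o(b)=q$, and it dominates $Y$ only when $\mathbf Q$ has exponent $q$ (the complete bipartite case, handled separately); the smallest $Y$-class is $\{(a,e):a\neq e\}$ and always dominates $X$. Hence outside the complete bipartite case exactly one of these two classes dominates, which identifies the sides and yields $p^k-1$ unambiguously (with an extra remark for $q=2$, where other $Y$-classes also have $p^k-1$ elements but fail to dominate). To complete your argument you would need either to adopt this device or to actually carry out the degree-profile comparison for every $\mathbf Q$ admitting a dominating $X$-class, not just the cyclic and elementary abelian ones.
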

\begin{proof}
According to Proposition \ref{bipartite D}, the graph $\mathcal D(\mathbf G)$ is bipartite and every abelian group whose difference graph is isomorphic to $\mathcal D(\mathbf G)$  is the direct product of an abelian group of a prime exponent and an abelian group of prime power order. We need to prove that these two groups must be isomorphic to $\mathbf P$ and $\mathbf Q$ respectively. One partite set of $\mathcal D(\mathbf G)$ (denoted by $K_e$) consists of elements $(e,b)$, where $b\in Q\setminus\{e\}$. Another partite set (denoted by $K_m$) consists of elements $(a,b)$, where $a\in P\setminus \{e\}$, $b\in Q$, and $b$ does not generate a maximal cyclic subgroup. 

Suppose that $q\neq 2$. The smallest neighborhood class in $K_e$ consists of elements $(e,b)$, where $o(b)=q$. This neighborhood class has $q^{l}-1$ vertices, while remaining classes have $q^{l-1}(q^u-q^{u-1})$ vertices, $u\geq 2$. The smallest neighborhood class in $K_m$ consists of elements $(a,e)$, $a\neq e$. There are $p^k-1$ of them, while other neighborhood classes of $K_m$ have $(p^k-1)(q^u-q^{u-1})$ vertices, $u\geq 1$. If the difference graph is complete bipartite, then $\mathbf Q=(\mathbf C_{q})^l$, and it is an easy task to reconstruct $\mathbf P$ and $\mathbf Q$. If not, only one of two smallest neighborhood classes  has the property that its vertices are adjacent to all vertices from the other partite set. This neighborhood class is the smallest class of $K_m$, and it has $p^k-1$ vertices. This means that $p$ and $k$ are uniquely determined. Now we can observe the partite set $K_e$, and from its neighborhood classes we can determine order sequence of the group $\mathbf Q$. Namely, the number of vertices in the neighborhood class of cardinality $q^{l}-1$ is equal to the number of elements of order $q$, and the total number of vertices in neighborhood classes of cardinality $q^{l-1}(q^u-q^{u-1})$ is equal to the number of elements of order $q^u$. According to Proposition \ref{niz redova}, by determining the order sequence, we have uniquely determined the group $\mathbf Q$.

For $q=2$, there will be more then one neighborhood class of cardinality $p^k-1$ in the set $K_m$, but only one of them has all vertices from the other partite set as neighbors, so we can proceed as in the case $q\neq 2$.
\end{proof}\\

For the rest of the paper, we will work under the assumption that neither of two Sylow subgroups $\mathbf P$ and $\mathbf Q$ has a prime exponent.

Let $a$ generate a biggest maximal cyclic subgroup in $\mathbf P$. Let $b$ generates a maximal proper cyclic subgroup of a biggest maximal cyclic subgroup  in $\mathbf Q$. The element $a$ we will call a {\bf max element} in $\mathbf P$, and the element $b$ a {\bf submax element} in $\mathbf Q$. It is clear that $o(a)=p^{r_1}$ and $o(b)=q^{t_1-1}$. The neighborhood class of $(a,b)$ we will call {\bf max-submax $PQ$-class}, or shortly, only a $PQ$-class. Max-submax $QP$-class is defined in an analogous way. These classes are special $u$-classes. All $PQ$-classes (and conversely all $QP$-classes) have the same cardinality. Let us observe the neighborhood class of $(a,b)$. According to Lemma \ref{ciklicne}, having in mind that all cyclic subgroups of the same order as $\langle a\rangle$ are maximal, we have that there exists exactly $p^{k-1}$ maximal cyclic subgroups of $\mathbf P$ which have the same maximal proper subgroup as $\langle a\rangle$. This implies $|[(a,b)]|=p^{k-1}(p^{r_1}-p^{r_1-1})(q^{t_1-1}-q^{t_1-2})$. Analogously, the cardinality of $QP$-classes is $q^{l-1}(q^{t_1}-q^{t_1-1})(p^{r_1-1}-p^{r_1-2})$.

\begin{lemma}\label{najvece klase}
The biggest neighborhood classes in $\mathcal D(\mathbf P \times \mathbf Q)$ are max-submax $PQ$-classes or max-submax $QP$-classes.
\end{lemma}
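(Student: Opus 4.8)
The plan is to enumerate every neighborhood class of $\mathcal D(\mathbf P\times\mathbf Q)$, bound the size of each one against the two quantities
$$C_{PQ}=p^{k-1}(p^{r_1}-p^{r_1-1})(q^{t_1-1}-q^{t_1-2}),\qquad C_{QP}=q^{l-1}(q^{t_1}-q^{t_1-1})(p^{r_1-1}-p^{r_1-2})$$
already computed for the max-submax $PQ$- and $QP$-classes, and then conclude that the largest cardinality equals $\max(C_{PQ},C_{QP})$ and is realised by such a class. By Lemmas \ref{srednje klase}, \ref{e klase} and \ref{maks klase} together with their $\mathbf P\leftrightarrow\mathbf Q$ mirror images, every vertex lies in a class of exactly one of the following kinds: an $m$-class $[(a,b)]$ with $a\notin M(\mathbf P)$, $b\notin M(\mathbf Q)$, both nontrivial; an $l$-class $[(e,b)]$ or its mirror $[(a,e)]$; or a $u$-class with exactly one coordinate maximal. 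I would carry along the closed-form sizes already obtained in the text, namely $(p^u-p^{u-1})(q^v-q^{v-1})$ for $m$-classes and $q^{l-1}(q^u-q^{u-1})$ (or $q^l-1$ when $u=1$) for $l$-classes.

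First I would dispose of the easy types. Since $a\notin M(\mathbf P)$ forces $\langle a\rangle$ to lie properly inside a larger cyclic subgroup, we have $o(a)=p^u\le p^{r_1-1}$, and likewise $o(b)=q^v\le q^{t_1-1}$; hence every $m$-class has size at most $(p^{r_1-1}-p^{r_1-2})(q^{t_1-1}-q^{t_1-2})=C_{PQ}/p^{k}$, which is strictly below $C_{PQ}$. For an $l$-class $[(e,b)]$ the order of $b$ may reach $q^{t_1}$, and $q^{l-1}(q^{u}-q^{u-1})$ increases with $u$, so the size is maximised at $u=t_1$, giving $q^{l-1}(q^{t_1}-q^{t_1-1})\le C_{QP}$ because $p^{r_1-1}-p^{r_1-2}\ge 1$ (recall $r_1\ge2$ under the standing assumption). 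The mirror classes $[(a,e)]$ are bounded by $C_{PQ}$ in the same way, using $t_1\ge2$.

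The heart of the argument is the $u$-classes, whose sizes, as already noted, are not functions of the orders alone. Consider a $u$-class $[(a,b)]$ with $a\notin M(\mathbf P)$ and $b\in M(\mathbf Q)$, $o(b)=q^v$; by Lemma \ref{maks klase} its size is $(p^{u}-p^{u-1})$ times the number of $y\in M(\mathbf Q)$ with $o(y)=q^v$ and $|\langle b\rangle\cap\langle y\rangle|\ge q^{v-1}$. I would bound this count by dropping the constraint $y\in M(\mathbf Q)$, which can only enlarge the set; by Lemma \ref{ciklicne} the relaxed count is at most $q^{l-1}(q^{v}-q^{v-1})$ for $v\ge2$ (and $q^l-1$ for $v=1$), and in all cases at most $q^{l-1}(q^{t_1}-q^{t_1-1})$ since $v\le t_1$ and $t_1\ge2$. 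Combining with $o(a)=p^{u}\le p^{r_1-1}$ gives size $\le(p^{r_1-1}-p^{r_1-2})\,q^{l-1}(q^{t_1}-q^{t_1-1})=C_{QP}$, with equality forcing $v=t_1$ and $u=r_1-1$ and \emph{all} the counted subgroups maximal — which is precisely the max-submax $QP$-class, explaining why relaxing maximality is lossless only at the top. The mirror $u$-classes ($a\in M(\mathbf P)$, $b\notin M(\mathbf Q)$) are bounded by $C_{PQ}$ symmetrically.

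Putting the three steps together, every class has size $\le\max(C_{PQ},C_{QP})$, attained by the $PQ$-class when $C_{PQ}\ge C_{QP}$ and by the $QP$-class otherwise; note that $C_{PQ}/C_{QP}=p^{k}/q^{l}$ and $p^k\ne q^l$ as $p\ne q$, so one of the two strictly dominates. I expect the main obstacle to be exactly the $u$-class estimate: their sizes are governed by the number of maximal cyclic subgroups of a prescribed order through a fixed core, which is not determined by element orders, so the clean inequality only emerges after (i) relaxing maximality to invoke the uniform count of Lemma \ref{ciklicne}, and (ii) verifying that this relaxation loses nothing precisely at the extremal orders. A secondary delicate point is the boundary behaviour when $p=2,\,r_1=2$ (or symmetrically $q=2,\,t_1=2$): there $p^{r_1-1}-p^{r_1-2}=1$, so the top $l$-class ties $C_{QP}$ (resp. $C_{PQ}$), and one must check that the maximum is nonetheless still witnessed by a max-submax class.
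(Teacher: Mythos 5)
Your proposal is correct and takes essentially the same route as the paper: it classifies the neighborhood classes into $m$-, $l$- and $u$-types and bounds each against the two max-submax cardinalities, with your $u$-class estimate (relaxing the maximality constraint on $y$ so that Lemma \ref{ciklicne} gives the uniform count $q^{l-1}(q^v-q^{v-1})$) being a more explicit version of the paper's bound $1\le z\le p^{k-1}$. The one point you only flag rather than settle --- the tie at $p=2$, $r_1=2$ between the top $l$-class and a $QP$-class --- is handled in the paper by noting that vertices of that $l$-class have strictly more neighbours than vertices of a $QP$-class, which is how the two kinds of classes are then told apart.
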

\begin{proof}
The cardinality of the neighborhood class of element $(e,b)$ can not exceed $q^{l-1}(q^{t_1}-q^{t_1-1})$. This is almost always less than the cardinality of a $QP$-class. Only for $p=2$, $r_1=2$, if $b$ is a max element in $\mathbf Q$, $[(e,b)]$ and every $QP$-class will be of same size. In this case, vertices of the class $[(e,b)]$ have more neighbors than vertices of a $QP$-class and we can distinguish them that way. $m$-classes are obviously smaller than max-submax classes. The cardinality of a $u$-class $[(a,b)]$, where $a$ generates a maximal cyclic subgroup in $P$, is $|[(a,b)]|=z(p^{u}-p^{u-1})(q^{v}-q^{v-1})$, where $1\leq z \leq p^{k-1}$, $u\leq r_1$, $v\leq t_1-1$. If the class in question is not a max-submax class, it is smaller than a $PQ$-class. This means that the biggest neighborhood classes are $PQ$-classes or $QP$-classes, except in a special case when some other classes could be of the same cardinality, but we have already explained how to distinguish them from max-submax classes.
\end{proof}

\begin{theorem}\label{parametri}
Let $\mathbf P=\mathbf C_{p^{r_1}} \times \dots \times\mathbf C_{p^{r_k}}$ and $\mathbf Q=\mathbf C_{q^{t_1}} \times \dots \times\mathbf C_{q^{t_l}}$, where $r_1\geq r_2\geq \dots \geq r_k$ and $t_1\geq t_2\geq \dots \geq t_l$, and let $n=r_1+ \dots +r_k$ and $m=t_1+ \dots +t_l$.
The difference graph $\mathcal D(\mathbf P\times \mathbf Q)$ uniquely determines parameters $p,q,n,m,k,l,r_1,t_1$.
\end{theorem}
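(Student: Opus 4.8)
The plan is to read off all eight parameters from the two \emph{pure} parts of the graph, namely $X=\{(a,e):a\in P\setminus\{e\}\}$ and $Y=\{(e,b):b\in Q\setminus\{e\}\}$, once these sets have been located intrinsically. First I would record the structure forced by Lemma~\ref{susednost}: two vertices of $X$ are never adjacent, two vertices of $Y$ are never adjacent, and every vertex of $X$ is adjacent to every vertex of $Y$, so $X$ and $Y$ are independent sets inducing a complete bipartite graph, with $|X|=p^{n}-1$ and $|Y|=q^{m}-1$. A direct computation from Lemma~\ref{susednost} also shows that the smallest $l$-class $C_Q=\{(e,b):o(b)=q\}$ satisfies $N(C_Q)=X$ exactly, since no mixed vertex can be adjacent to an element of $C_Q$; symmetrically the smallest $P$-pure class $C_P=\{(a,e):o(a)=p\}$ satisfies $N(C_P)=Y$. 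This gives the basic handle: $X$ and $Y$ appear as the neighborhoods of two distinguished classes.

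Granting that $X$ and $Y$ have been identified as unions of neighborhood classes, the extraction of the parameters is routine. By the analogue of Lemma~\ref{e klase} with the roles of $P$ and $Q$ interchanged, the classes inside $X$ have sizes $p^{k}-1$ (for elements of order $p$) and $p^{k-1}(p^{u}-p^{u-1})=p^{k+u-2}(p-1)$ for $2\le u\le r_1$; these are $r_1$ strictly increasing, hence distinct, values. Thus the number of distinct class sizes occurring in $X$ equals $r_1$; the two smallest of them, $p^{k}-1$ and $p^{k}(p-1)$, determine $p$ and $k$ (dividing the second by one more than the first gives $p-1$); and $|X|=p^{n}-1$ then yields $n$. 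Running the same analysis on $Y$ produces $q,\,l,\,t_1,\,m$. Since $p\neq q$, the two blocks of data attach automatically to the correct prime, so $p,q,n,m,k,l,r_1,t_1$ are all recovered. As a cross-check one may use Lemma~\ref{najvece klase}: with $p,q,k,l,n,m$ in hand, the two largest class sizes $A=p^{k-1}(p^{r_1}-p^{r_1-1})(q^{t_1-1}-q^{t_1-2})$ and $B=q^{l-1}(q^{t_1}-q^{t_1-1})(p^{r_1-1}-p^{r_1-2})$ determine $r_1$ and $t_1$ by themselves.

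The main obstacle is the very first step: identifying $X$ and $Y$ canonically from the abstract graph. The tempting characterization that $C_P,C_Q$ are exactly the neighborhood classes whose neighborhood is an independent set is true generically but fails in degenerate cases. Indeed, when $t_1=2$ the submax element has order $q$, and one checks from Lemma~\ref{susednost} that any two neighbors of a max–submax $PQ$-class have second coordinates of equal order $q^{2}$, so no $\twoheadrightarrow$-relation holds between them and the entire neighborhood of that class is independent; the symmetric failure occurs for $QP$-classes when $r_1=2$, with further coincidences when $p=2$ or $q=2$, exactly the exceptions already flagged in Lemmas~\ref{spec slucaj} and~\ref{najvece klase}. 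Separating the genuine pure classes from these impostors is where the real work lies: the decisive secondary features are that $N(C_Q)=X$ is again a union of neighborhood classes whose sizes follow the pure pattern above (so that $C_P$, the unique smallest class inside $X$, satisfies $N(C_P)=Y$ and closes up the dual pair $X,Y$), whereas the neighborhood of a max–submax class is not closed in this way, and that comparing $|X|=p^{n}-1$ and $|Y|=q^{m}-1$ with the max–submax sizes $A,B$ and with vertex degrees breaks every remaining tie. Making this separation uniform across all parameter ranges, including the small-prime exceptions, is the crux of the argument.
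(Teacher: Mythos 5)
Your proposal has a genuine gap at exactly the point you yourself flag as ``the crux'': the canonical identification of the pure parts $X=\{(a,e):a\neq e\}$ and $Y=\{(e,b):b\neq e\}$ inside the abstract graph. Everything after that point is correct and would indeed recover all eight parameters (your computation of the class sizes inside $X$, the strict monotonicity giving $r_1$ as the number of distinct sizes, and $n$ from $|X|=p^n-1$ all check out), but the identification step is only sketched, and the sketch underestimates the difficulty. The set of ``impostor'' classes whose neighborhood is independent is larger than you describe: the neighborhood of a max--submax $PQ$-class is independent for \emph{every} value of $t_1$, not only $t_1=2$, since all neighbors $(c,d)$ of such a class have $o(d)=q^{t_1}$ and hence no $\twoheadrightarrow$-relation in the second coordinate; and there are further impostors, e.g.\ any class $[(a,b)]$ where $a$ generates a maximal cyclic subgroup of order $p$ in $\mathbf P$ has neighborhood contained in $\{(e,d):d\in Q\}$, which is automatically independent. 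Your proposed ``secondary features'' (closure of $N(C_Q)$ under the pure size pattern, comparison with the max--submax sizes $A,B$, vertex degrees) are plausible but none of them is verified, and verifying them uniformly across the small-prime and small-exponent coincidences is precisely the work a proof would have to contain.

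The paper avoids this obstacle by inverting your strategy: instead of locating the pure classes first (which are among the \emph{smallest} and hence hard to isolate), it anchors on the \emph{largest} neighborhood classes, shown in Lemma~\ref{najvece klase} to be the max--submax classes (with the single tie $p=2$, $r_1=2$ broken by counting neighbors), and then reads $p,r_1,q,t_1,k,l$ off the sizes of the classes of neighbors of one max--submax class, and $n,m$ off the degrees of the smallest such neighbor class. The identification of $H_e$ (your $X\cup Y$) is deferred to Theorem~\ref{glavna}, where it is carried out via the SDK-property and Lemma~\ref{SDK} only \emph{after} the parameters are already known, which makes the size comparisons unambiguous. If you want to keep your order of argument, you would need to prove an analogue of Lemma~\ref{SDK} without prior knowledge of the parameters, which is substantially harder than the one-line justifications you offer.
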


\begin{proof}
According to Lemma \ref{najvece klase}, the biggest neighborhood classes are max-submax classes. Without loss of generality, we may assume that $PQ$-classes are biggest. Take a look at the neighborhood classes that contain the neighbors of elements of a $PQ$-class (let it be the class $[(a,b)]$). These classes are of the following cardinalities: $q^{l-1}(q^{t_1}-q^{t_1-1})(p^{r_1-1}-p^{r_1-2})$, $q^{l-1}(q^{t_1}-q^{t_1-1})(p^{r_1-2}-p^{r_1-3})$, $\dots$, $q^{l-1}(q^{t_1}-q^{t_1-1})(p-1)$, $q^{l-1}(q^{t_1}-q^{t_1-1})$. From there we can determine $p$ and $r_1$. The biggest class which contains neighbors of the class $[(a,b)]$ is a $QP$-class. Similar to the case of $PQ$-classes, we can use neighbors of the observed $QP$-class to determine $q$ and $t_1$. Knowing those four parameters and cardinalities of above mentioned classes of neighbors, we can now determine $k$ and $l$. Finally, let us note that the smallest class containing the neighbors of the class $[(a,b)]$ is an $l$-class, and its elements have precisely $(p^n-1)q^{t_1-1}$ neighbors. From this we will determine $n$, and $m$ can be determined in a similar way.
\end{proof}\\

In order to make a complete description of the groups $\mathbf P$ and $ \mathbf Q$, we will consider maximal complete bipartite subgraphs of $\mathcal D(\mathbf P \times \mathbf Q)$, and neighborhood classes of which these subgraphs consist. First, let us note that one of  maximal complete bipartite subgraphs is induced by the set of vertices $X_e \cup Y_e$, where $X_e=\{(e,b) \; : \; b\in Q\setminus \{e\}\}$ and $Y_e=\{(a,e) \; : \; a\in P\setminus \{e\}\}$. Moreover, $X_e$ and $Y_e$ are partite sets of this subgraph. We will denote this subgraph by $H_e$. It is obviously complete bipartite, and it is maximal for, if $a,b \neq e$, then $(a,b)$ is not a neighbor of either $(a,e)$ or $(e,b)$. Neighborhood classes of $H_e$ are  $l$-classes of $\mathcal D(\mathbf P \times \mathbf Q)$. The smallest class in $X_e$ is $[(e,b)]$, where $o(b)=q$, and it has $q^l-1$ vertices. Analogously, the smallest class in $Y_e$ is $[(a,e)]$, where $o(a)=p$, and it has $p^k-1$ vertices. Above mentioned smallest classes have the property that their neighborhoods in $\mathcal D(\mathbf G)$ are equal to the other partite set.
For such classes (or vertices) in a complete bipartite subgraph we will say that they have the SDK-property.

Now we will pay attention to other maximal bipartite subgraphs.

\begin{lemma}\label{acdb} Let $H$ be a complete bipartite subgraph of $\mathcal D(\mathbf G)$ with partite sets $X$ and $Y$. Further, let $(a_1,b_1)\in X$, $(c_1,d_1)\in Y$, and $c_1\twoheadrightarrow a_1$, $b_1\twoheadrightarrow d_1$. Then for all $(a,b)\in X$, $(c,d)\in Y$ it holds: $c\twoheadrightarrow a$ and $b\twoheadrightarrow d$.

\end{lemma}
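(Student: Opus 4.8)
The plan is to recast the statement as: a complete bipartite subgraph of $\mathcal D(\mathbf G)$ cannot carry two cross-edges of opposite orientation. First I would record the dictionary that makes everything run: for elements $x,y$ of the $p$-group $\mathbf P$ one has $x\twoheadrightarrow y$ exactly when $\langle y\rangle\subsetneq\langle x\rangle$ (and likewise in $\mathbf Q$), so that $\twoheadrightarrow$ is nothing but proper inclusion of generated subgroups and is in particular \emph{transitive}. By Lemma \ref{susednost}, any edge of $\mathcal D(\mathbf G)$ from $(a,b)\in X$ to $(c,d)\in Y$ is of exactly one of two types: the \emph{$Y$-type} $c\twoheadrightarrow a,\ b\twoheadrightarrow d$, or the \emph{$X$-type} $a\twoheadrightarrow c,\ d\twoheadrightarrow b$. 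The hypothesis says the witnessing edge $(a_1,b_1)$--$(c_1,d_1)$ is of $Y$-type, and the goal is that every cross-edge is of $Y$-type.

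I would fix an arbitrary $(a,b)\in X$ and $(c,d)\in Y$ and assume for contradiction that this edge is of $X$-type, i.e. $a\twoheadrightarrow c$ and $d\twoheadrightarrow b$. The bridge is the edge $(a,b)$--$(c_1,d_1)$, which exists because $H$ is complete bipartite, and I would split on its type. If $(a,b)$--$(c_1,d_1)$ is of $X$-type then $\langle c_1\rangle\subsetneq\langle a\rangle$ and $\langle b\rangle\subsetneq\langle d_1\rangle$; feeding in the hypothesis $\langle a_1\rangle\subsetneq\langle c_1\rangle$ and $\langle d_1\rangle\subsetneq\langle b_1\rangle$ and using transitivity gives $\langle a_1\rangle\subsetneq\langle a\rangle$ and $\langle b\rangle\subsetneq\langle b_1\rangle$, that is $a\twoheadrightarrow a_1$ and $b_1\twoheadrightarrow b$; by Lemma \ref{susednost} the vertices $(a,b)$ and $(a_1,b_1)$ are then adjacent, although both lie in the same partite set $X$. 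If instead $(a,b)$--$(c_1,d_1)$ is of $Y$-type then $\langle a\rangle\subsetneq\langle c_1\rangle$ and $\langle d_1\rangle\subsetneq\langle b\rangle$; combining with the assumed $X$-type relations $\langle c\rangle\subsetneq\langle a\rangle$ and $\langle b\rangle\subsetneq\langle d\rangle$ yields $\langle c\rangle\subsetneq\langle c_1\rangle$ and $\langle d_1\rangle\subsetneq\langle d\rangle$, i.e. $c_1\twoheadrightarrow c$ and $d\twoheadrightarrow d_1$, so Lemma \ref{susednost} makes $(c,d)$ and $(c_1,d_1)$ adjacent within the partite set $Y$. In both sub-cases one reaches an edge inside a single part, so the assumed $X$-type is impossible and the edge is of $Y$-type, giving $c\twoheadrightarrow a$ and $b\twoheadrightarrow d$.

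The step I expect to matter most is recognizing that this argument uses that the partite sets $X$ and $Y$ are \emph{independent} sets of $\mathcal D(\mathbf G)$, equivalently that $H$ is an induced complete bipartite subgraph; this is exactly what is violated at the end of each sub-case. It is also genuinely necessary: if within-part edges were allowed, one could realize a four-vertex configuration whose $\mathbf P$- and $\mathbf Q$-coordinates form two inclusion chains producing one $X$-type and one $Y$-type cross-edge at once. Once independence is available, the whole proof reduces to two applications of transitivity of $\twoheadrightarrow$ together with Lemma \ref{susednost}, with no case analysis beyond the single split on the type of the bridging edge $(a,b)$--$(c_1,d_1)$. A minor loose end to dispatch is the degenerate situation $(a,b)=(a_1,b_1)$ or $(c,d)=(c_1,d_1)$, in which the conclusion is immediate.
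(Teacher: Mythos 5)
Your proof is correct and follows essentially the same route as the paper's: both arguments rest on Lemma \ref{susednost}, the transitivity of $\twoheadrightarrow$ (as proper inclusion of generated cyclic subgroups), and the fact that a forbidden orientation would force an edge of $\mathcal D(\mathbf G)$ inside a single partite set. The paper organizes this as a two-stage propagation through $(a_1,b_1)$ rather than your case split on the bridge edge $(a,b)$--$(c_1,d_1)$, but the contradictions reached are identical, and your explicit remark that the partite sets must be independent in $\mathcal D(\mathbf G)$ itself matches an assumption the paper uses implicitly.
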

\begin{proof}
Let $(a,b)\in X$, $(c,d)\in Y$. If $a_1\twoheadrightarrow c$, then it must be $d\twoheadrightarrow b_1$, since $(a_1,b_1)$ and $(c,d)$ are neighbors in $\mathcal D(\mathbf G)$. However, then $c_1\twoheadrightarrow c$ and $d\twoheadrightarrow d_1$, which means that $(c,d)$ and $(c_1,d_1)$ are neighbors in the difference graph, which is impossible. So, it must be $c\twoheadrightarrow a_1$ and $b_1\twoheadrightarrow d$. Suppose now that $a\twoheadrightarrow c$ and $d\twoheadrightarrow b$. Then $a\twoheadrightarrow a_1$ and $b_1\twoheadrightarrow b$ must hold, and then $(a,b)$ and $(a_1,b_1)$ would be neighbors in $\mathcal D(\mathbf G)$, and this is obviously not true. Therefore, it must be $c\twoheadrightarrow a$ and $b\twoheadrightarrow d$.
\end{proof}

\begin{lemma}\label{AD} Let $H$ be a maximal complete bipartite subgraph of the graph $\mathcal D(\mathbf G)$, with partite sets $X$ and $Y$. Also, let for all $(a,b)\in X$, $(c,d)\in Y$ it holds: $c\twoheadrightarrow a$ and $b\twoheadrightarrow d$. If $A=\{a\in P \; : \; (\exists b\in Q) \; (a,b) \in X \}$ then  $A$ is a non-maximal cyclic subgroup of $\mathbf P$.

\end{lemma}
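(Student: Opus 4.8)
The plan is to identify $A$ exactly as the cyclic subgroup generated by a maximal-order element of $A$, and then to exhibit it as a proper subgroup of a larger cyclic group. Throughout I may assume $X$ and $Y$ are non-empty. First I would fix any $(c,d)\in Y$. By hypothesis $c\twoheadrightarrow a$ for every $a\in A$, so every element of $A$ is a power of $c$; hence $A\subseteq\langle c\rangle$, and in particular all of $A$ lies inside a single cyclic $p$-subgroup. Since the subgroups of a cyclic $p$-group form a chain under inclusion, if $a^*$ is an element of $A$ of largest order then $\langle a\rangle\subseteq\langle a^*\rangle$ for every $a\in A$, whence $A\subseteq\langle a^*\rangle$.

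The heart of the argument, and the step I expect to be the main obstacle, is the reverse inclusion $\langle a^*\rangle\subseteq A$, for which I would invoke the maximality of $H$. Fix $b^*$ with $(a^*,b^*)\in X$ and let $a'\in\langle a^*\rangle$ be arbitrary; the goal is to show $(a',b^*)$ is a vertex of $\mathcal D(\mathbf G)$ adjacent to every element of $Y$. For adjacency, take $(c,d)\in Y$: we have $b^*\twoheadrightarrow d$ (from $(a^*,b^*)\in X$ and the hypothesis), and since $\langle a'\rangle\subseteq\langle a^*\rangle\subsetneq\langle c\rangle$ (the last inclusion being strict because $c\twoheadrightarrow a^*$) we also get $c\twoheadrightarrow a'$, so by Lemma \ref{susednost} the vertices $(a',b^*)$ and $(c,d)$ are adjacent. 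To see that $(a',b^*)$ is a vertex, note $b^*\neq e$ and $a'\notin M(\mathbf P)$, because $\langle a'\rangle$ is properly contained in the cyclic group $\langle c\rangle$; Lemma \ref{cvorovi diferenc grafa} then applies, even in the case $a'=e$. Finally, $(a',b^*)\notin Y$, since otherwise applying the hypothesis to $(a^*,b^*)\in X$ and $(a',b^*)\in Y$ would force $b^*\twoheadrightarrow b^*$, which is impossible. Thus $(a',b^*)$ is a vertex outside $Y$ adjacent to all of $Y$, so by maximality of the complete bipartite subgraph $H$ it must belong to $X$, giving $a'\in A$.

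Combining the two inclusions yields $A=\langle a^*\rangle$, a cyclic subgroup of $\mathbf P$. To conclude that it is non-maximal, I would simply observe that $\langle a^*\rangle\subsetneq\langle c\rangle$ with $\langle c\rangle$ cyclic, the containment being strict because $c\twoheadrightarrow a^*$; hence $A$ is properly contained in a cyclic subgroup and therefore cannot itself be a maximal cyclic subgroup. (When $a^*=e$ this still reads correctly, since $A=\{e\}\subsetneq\langle c\rangle$ is non-maximal as $\mathbf P$ is non-trivial.) The only genuinely delicate point is the maximality step, where one must verify both that $(a',b^*)$ is a legitimate vertex and that it cannot accidentally lie in $Y$, so that maximality really does place it in $X$.
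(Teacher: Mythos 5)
Your argument agrees with the paper's up to the inclusion $A\subseteq\langle a^*\rangle$ and in the closing observation that $\langle a^*\rangle\subsetneq\langle c\rangle$ forces non-maximality, but the step you yourself flag as delicate --- placing $(a',b^*)$ in $X$ by maximality --- has a genuine gap. The complete bipartite subgraphs in this paper are \emph{induced} (the partite sets are independent sets; the paper's own proof of this lemma explicitly verifies that the vertex it adds is non-adjacent to everything in $X$ before invoking maximality). So from ``$(a',b^*)$ is a vertex outside $Y$ adjacent to all of $Y$'' you cannot conclude $(a',b^*)\in X$: you must also show $(a',b^*)$ is non-adjacent to every vertex of $X$, and for an arbitrary choice of $b^*$ this is false. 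Indeed, if $X$ also contains some $(a'',b)$ with $\langle a'\rangle\subsetneq\langle a''\rangle\subseteq\langle a^*\rangle$ and $\langle b\rangle\subsetneq\langle b^*\rangle$ (such a pair can sit in $X$ alongside $(a^*,b^*)$ without violating independence), then $a''\twoheadrightarrow a'$ and $b^*\twoheadrightarrow b$, so by Lemma \ref{susednost} the vertex $(a',b^*)$ \emph{is} adjacent to $(a'',b)\in X$ and hence cannot lie in $X$ at all. A concrete instance: in $\mathbf C_{p^2}\times\mathbf C_{q^3}$ take $Y=\{(c,e):o(c)=p^2\}$ and $X$ consisting of all $(x,y)$ with $o(x)=p$, $o(y)\in\{q,q^2,q^3\}$ together with all $(e,y)$ with $o(y)=q$; this $H$ is maximal complete bipartite and satisfies the hypotheses, yet for $a'=e$ and a $b^*$ of order $q^3$ the vertex $(e,b^*)$ is adjacent to every $(a^*,b)\in X$ with $o(b)=q$, so your intermediate claim $(a',b^*)\in X$ fails (here $a'=e$ does lie in $A$, but only because $(e,b)\in X$ for $b$ of order $q$, not $q^3$).

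The repair is exactly what the paper does: the second coordinate must be chosen depending on $a'$. The paper first forces $e\in A$ by adjoining $(e,b')$ with $\langle b'\rangle$ a minimal cyclic overgroup of $\langle d\rangle$, and then, for a general $a'\in\langle a^*\rangle$, adjoins $(a',b'')$ where $b''$ has maximal order in $B'=\{b\in Q:(\exists x\in\langle a'\rangle)\,(x,b)\in X\}$; this maximality of $o(b'')$ is precisely what kills the adjacency obstruction above, and the paper then checks non-adjacency to all of $X$ case by case before applying maximality of $H$. It also uses that neighborhood classes are wholly contained in $H$, so it suffices to put one generator of each subgroup of $\langle a^*\rangle$ into $A$. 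Your write-up needs both the more careful selection of the witness and the missing non-adjacency verification.
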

\begin{proof}
Let $a', a''\in A$. For any $(c,d)\in Y$, $a'$ and $a''$ belong to  $\langle c\rangle$. Since $\mathbf P$ is a $p$-group, it must be $a'\simp a''$. If $a$ is an element of a maximal order from $A$, then $A\subseteq  \langle a \rangle$. Analogously, if $D=\{d\in Q \; : \; (\exists c\in P) \; (c,d) \in Y \}$, then there exists $d\in D$ such that $D\subseteq  \langle d \rangle$. We will show that $A= \langle a \rangle$.

If a vertex from a neighborhood class belongs to the graph $H$, then, because of the maximality of $H$, the whole neighborhood class belongs to it. Consequently, if a generator of a cyclic subgroup of the group $\langle a \rangle$ belongs to $A$, then $A$ contains all generators of that cyclic subgroup. 

Suppose that $e$ does not belong to $A$. Let $\langle b' \rangle$ be a minimal cyclic subgroup of $\mathbf Q$ which contains $\langle d \rangle$ as a proper subgroup. The subgraph of $\mathcal D(\mathbf G)$ induced by $H \cup \{(e,b')\}$ is a complete bipartite subgraph with partite sets $X \cup \{(e,b')\}$ and $Y$. Since $H$ is a maximal complete bipartite subgraph, $A$ must contain $e$. 

Suppose now that $a'\in \langle a \rangle$ and that no generator of the subgroup $\langle a' \rangle$ belongs to $A$. Let $B'=\{b\in Q \; : \; (\exists x\in \langle a' \rangle) \; (x,b) \in X \}$ and $b''$ be an element of a maximal order from $B'$. Then no element $b\in B'$ can satisfy $b\twoheadrightarrow b''$. The subgraph of $\mathcal D(\mathbf G)$ induced by $H \cup \{(a',b'')\}$ is a complete bipartite subgraph with partite sets $X\cup \{(a',b'')\}$ and $Y$. Namely, $(a',b'')$ is obviously adjacent to all vertices from $Y$ and obviously non-adjacent to all $(a'',b)$ from $X$ such that $a''\in \langle a' \rangle$. If $(a',b'')$ is adjacent to $(a''',b)\in X$ such that $a'''\twoheadrightarrow a'$, then $(a''',b)$ is adjacent to $(a'',b'')$ from $X$, where $a''$ is some element from $\langle a' \rangle$, which is impossible. Therefore, all generators of cyclic subgroups of $\langle a \rangle$ are in $A$, which implies $A= \langle a \rangle$.
\end{proof}

\begin{lemma}\label{SDK}
Let $H$ be a maximal complete bipartite subgraph of the graph $\mathcal D(\mathbf G)$, with partite sets $X$ and $Y$. If $X$ and $Y$ contain elements (classes) with the SDK-property, then $H=H_e$, or $A\neq \{e\}$, $D\neq \{e\}$, and all cyclic subgroups of $\mathbf P$ or $\mathbf Q$ containing $A$ or $D$ as proper subgroups are maximal.
\end{lemma}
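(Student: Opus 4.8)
The plan is to exploit the fact that for an SDK vertex $v\in X$ the set $N(v)$ equals the \emph{whole} partite set $Y$, and is therefore an independent set of $\mathcal D(\mathbf G)$; symmetrically, an SDK vertex $w\in Y$ satisfies $N(w)=X$, again independent. So the engine of the proof will be a characterization of when a neighborhood $N(u)$ is an independent set, and the submaximality in the conclusion will fall out of that characterization rather than out of the maximality of $H$ alone.

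First I would record the ``two--quadrant'' structure of a neighborhood. For $u=(u_1,u_2)$, Lemma \ref{susednost} splits $N(u)$ into $Q_1=\{(x,y): u_1\twoheadrightarrow x,\ y\twoheadrightarrow u_2\}$ and $Q_2=\{(x,y): x\twoheadrightarrow u_1,\ u_2\twoheadrightarrow y\}$. A short computation shows every vertex of $Q_1$ is adjacent to every vertex of $Q_2$ (chain the strict divisibilities $\langle x\rangle\subsetneq\langle u_1\rangle\subsetneq\langle x'\rangle$ and $\langle y'\rangle\subsetneq\langle u_2\rangle\subsetneq\langle y\rangle$ and apply Lemma \ref{susednost}), so $N(u)$ can be independent only if $Q_1=\emptyset$ or $Q_2=\emptyset$. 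Each $Q_i$ is a product set, and a product is independent iff one of its two factors is a $\twoheadrightarrow$-antichain. Finally $Q_1=\emptyset$ exactly when $u_1=e$ or $u_2\in M(\mathbf Q)$, and $Q_2=\emptyset$ exactly when $u_1\in M(\mathbf P)$ or $u_2=e$.

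Then I would apply this to the two SDK vertices. After using Lemma \ref{acdb} to fix the orientation (so that $c\twoheadrightarrow a$, $b\twoheadrightarrow d$ for all $(a,b)\in X$, $(c,d)\in Y$), the vertex $w=(c,d)$ lies in $Q_2$ of $v=(a,b)$ and $v$ lies in $Q_1$ of $w$; hence independence of $N(v)=Y$ and $N(w)=X$ forces $Q_1(v)=\emptyset$ and $Q_2(w)=\emptyset$. I first dispose of the degenerate possibilities $a=e$ and $d=e$: using that $\mathbf P,\mathbf Q$ have non-prime exponent (so $P\setminus\{e\}$, $Q\setminus\{e\}$ are not antichains), each of these forces one of $A,D$ to equal $\{e\}$, whence $Y=Y_e$, $X=X_e$ and $H=H_e$. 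In the remaining case $a,d\neq e$, emptiness of $Q_1(v)$ gives $b\in M(\mathbf Q)$ and emptiness of $Q_2(w)$ gives $c\in M(\mathbf P)$; this already yields $A=\{x:\langle x\rangle\subsetneq\langle c\rangle\}=\langle c^p\rangle$ and $D=\langle d\rangle$, both cyclic and non-maximal, in accordance with Lemma \ref{AD}.

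The heart of the argument is the independence of $Q_2(v)=\{x:\langle a\rangle\subsetneq\langle x\rangle\}\times\{y:\langle y\rangle\subsetneq\langle b\rangle\}$. Since $b\in M(\mathbf Q)$ and $D\neq\{e\}$, the second factor is a nontrivial cyclic group, hence not an antichain, so the first factor $\{x:\langle a\rangle\subsetneq\langle x\rangle\}$ must be a $\twoheadrightarrow$-antichain; this says exactly that every cyclic subgroup of $\mathbf P$ properly containing $\langle a\rangle$ is maximal. One more comparison identifies $\langle a\rangle$ with $A$: a generator of $A=\langle c^p\rangle$ together with $c$ would be a comparable pair inside that antichain unless $\langle a\rangle=A$. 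Thus $A$ is submaximal, and the symmetric computation on $N(w)=X$ gives the submaximality of $D$, establishing the statement. I expect the main obstacle to be precisely this independence analysis: the tempting shortcut of invoking only the maximality of $H$ fails, since one can exhibit a maximal complete bipartite $H$ sitting over a non-submaximal $A$; it is the hidden requirement that the partite set $N(v)$ itself be independent that actually forces submaximality.
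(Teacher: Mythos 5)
Your proposal is correct and rests on the same mechanism as the paper's own proof: the neighborhood of an SDK vertex equals the whole opposite partite set and hence must be independent, so a $\twoheadrightarrow$-chain of length two above $A$ (resp.\ $D$) would produce two mutually adjacent neighbors --- exactly the paper's pair $(c',e)$, $(c'',d)$. Your $Q_1/Q_2$ decomposition and product-independence criterion merely systematize this, filling in along the way (the identification $A=\langle a\rangle=\langle c^{p}\rangle$, the degenerate cases $a=e$, $d=e$) what the paper handles via Lemma \ref{AD} and its explicit case split on $A=\{e\}$, $D=\{e\}$.
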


\begin{proof}
If $a'\in A=\langle a \rangle$ and $a'$ does not generate $A$, then $(a,'b)\in X$ can not have the SDK-property, since it is adjacent to $(a,e)\notin Y$.
If $D\neq \{e\}$ and there exist $c',c''\in P\setminus A$ such that $c'\twoheadrightarrow c''$ and $c''\twoheadrightarrow a$, then $(a,b)\in X$ can not have the SDK-property, since it has two neighbors, $(c',e)$ and $(c'',d)$, which are adjacent to each other. Therefore, there are three possibilities.

If $A=\{e\}$, $D=\{e\}$, then $H=H_e$.

If $A= \{e\}$, $D\neq \{e\}$, then $\mathbf P=(\mathbf C_p)^k$, and this case has been already studied in Lemma \ref{spec slucaj}.

If $A\neq \{e\}$, $D\neq \{e\}$, then all cyclic subgroups which contain $A$ or $D$ are maximal.
\end{proof}

\begin{theorem}\label{glavna}
Let $\mathbf G$ and $\mathbf K$ be finite abelian groups having exactly two Sylow subgroups. If $\mathcal D(\mathbf G)$ and $\mathcal D(\mathbf K)$ are isomorphic, then $\mathbf G$ and $\mathbf K$ are isomorphic groups.
\end{theorem}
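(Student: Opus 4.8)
The goal is to prove that the abstract graph $\mathcal{D}(\mathbf{G})$ determines $\mathbf{G}$ up to isomorphism; since both $\mathbf{G}$ and $\mathbf{K}$ are abelian with exactly two Sylow subgroups, each factors as a $p$-group times a $q$-group, and by Proposition~\ref{niz redova} an abelian group of prime-power order is pinned down by its order sequence. Hence the plan reduces to recovering, from the isomorphism type of $\mathcal{D}(\mathbf{G})$ alone, the order sequences of $\mathbf{P}$ and $\mathbf{Q}$ (equivalently the partitions $(r_1,\dots,r_k)$ and $(t_1,\dots,t_l)$). Every step below reads off a graph-isomorphism invariant, so once the recovery succeeds for $\mathbf{G}$ it applies verbatim to any $\mathbf{K}$ with $\mathcal{D}(\mathbf{K})\cong\mathcal{D}(\mathbf{G})$, forcing identical order sequences and thus $\mathbf{G}\cong\mathbf{K}$.

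First I would apply Theorem~\ref{parametri} to extract $p,q,n,m,k,l,r_1,t_1$ from the graph. If $r_1=1$ or $t_1=1$, one Sylow subgroup has prime exponent and Lemma~\ref{spec slucaj} already yields the result; so from now on I assume $r_1,t_1\geq 2$, as stipulated. The heart of the argument is then to single out the subgraph $H_e$ with partite sets $X_e=\{(e,b):b\in Q\setminus\{e\}\}$ and $Y_e=\{(a,e):a\in P\setminus\{e\}\}$ purely graph-theoretically. Here Lemmas~\ref{acdb}, \ref{AD} and \ref{SDK} do the work: among maximal complete bipartite subgraphs whose \emph{both} partite sets contain a class with the SDK-property, Lemma~\ref{SDK} shows the only options are $H_e$ itself and subgraphs for which the associated cyclic subgroups $A,D$ are nontrivial (necessarily of submaximal orders $p^{r_1-1},q^{t_1-1}$). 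Since $H_e$ is the unique such subgraph whose two partite sets have the full sizes $|Q|-1=q^m-1$ and $|P|-1=p^n-1$ --- numbers already known from Theorem~\ref{parametri} --- I can identify $X_e$ and $Y_e$ as labelled vertex sets (and tell them apart, since $p^n\neq q^m$ for distinct primes).

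With $X_e$ in hand, I would partition it into its neighborhood classes, which by Lemma~\ref{e klase} are exactly the $l$-classes lying over $\mathbf{Q}$: the unique class of size $q^l-1$ collects all elements of order $q$, while for each $u\geq 2$ the classes have size $q^{l-1}(q^u-q^{u-1})$. Because this size is strictly increasing in $u$ and $q,l$ are known, grouping the classes of $X_e$ by cardinality recovers, for every $u$, the number of elements of $\mathbf{Q}$ of order $q^u$, that is, the order sequence of $\mathbf{Q}$. The symmetric computation on $Y_e$ gives the order sequence of $\mathbf{P}$. Proposition~\ref{niz redova} then determines $\mathbf{P}$ and $\mathbf{Q}$, hence $\mathbf{G}=\mathbf{P}\times\mathbf{Q}$, up to isomorphism, completing the reconstruction.

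The main obstacle is the middle step: proving that $H_e$ is genuinely distinguishable from the competing maximal complete bipartite subgraphs of Lemma~\ref{SDK}. I expect the delicate point to be verifying that none of the submaximal ($A,D\neq\{e\}$) subgraphs can accidentally attain partite-set sizes $p^n-1$ and $q^m-1$, together with handling the small exceptional configurations (for instance $p=2$, $r_1=2$) already flagged in the proof of Lemma~\ref{najvece klase}, where the cardinalities of different class types collide and must be separated by counting neighbors rather than by size. Everything after the identification of $X_e$ and $Y_e$ is then routine bookkeeping of class sizes.
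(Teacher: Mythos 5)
Your overall strategy coincides with the paper's: recover the parameters via Theorem~\ref{parametri}, dispose of the prime-exponent case by Lemma~\ref{spec slucaj}, single out $H_e$ among the maximal complete bipartite subgraphs both of whose partite sets contain an SDK-class, read off the order sequences of $\mathbf P$ and $\mathbf Q$ from the $l$-classes of $Y_e$ and $X_e$, and finish with Proposition~\ref{niz redova}. The difficulty is that the one step you identify yourself as ``the main obstacle'' --- showing that no competing subgraph with $A,D\neq\{e\}$ can have partite sets of sizes $p^n-1$ and $q^m-1$ --- is exactly the step that carries the content of the proof beyond the lemmas, and you assert it rather than prove it. As written, the proposal therefore has a genuine gap at its pivotal point. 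For the record, your criterion is salvageable: for such a competing subgraph one has $X=A\times\{y: D\subsetneq\langle y\rangle\}$ and $Y=\{c: A\subsetneq\langle c\rangle\}\times D$, so $|X|$ is divisible by $q$ (since $v\geq 1$ and every $y$ occurring lies in a maximal cyclic subgroup properly above $D$) and $|Y|$ is divisible by $p$, whereas $q^m-1$ and $p^n-1$ are not; but some such argument must actually appear.

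The paper avoids this issue by comparing a different invariant: the cardinalities of the SDK-classes themselves. In $H_e$ these are $q^l-1$ and $p^k-1$, while Lemma~\ref{SDK} forces the SDK-classes of any other admissible subgraph to have cardinalities $(p^u-p^{u-1})q^{l-1}(q^{v+1}-q^v)$ and $(q^v-q^{v-1})p^{k-1}(p^{u+1}-p^u)$ with $u,v\geq 1$, which are visibly larger; so $H_e$ is the subgraph whose two SDK-classes are smallest, and no further verification is needed. One more small inaccuracy in your write-up: the subgroups $A$ and $D$ of a competing subgraph need not have orders $p^{r_1-1}$ and $q^{t_1-1}$; Lemma~\ref{SDK} only says that every cyclic subgroup properly containing $A$ (resp.\ $D$) is maximal, and maximal cyclic subgroups need not have order $p^{r_1}$ (e.g.\ $\langle(p,1)\rangle$ is a maximal cyclic subgroup of order $p^2$ in $\mathbf C_{p^3}\times\mathbf C_{p^2}$). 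This parenthetical is not load-bearing for your size criterion, but it would matter if you tried to enumerate the competing subgraphs explicitly.
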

\begin{proof}
Let $\mathbf G=\mathbf P\times \mathbf Q$, where $\mathbf P$ and $\mathbf Q$ are as defined in the statement of Theorem \ref{parametri}. According to Theorem \ref{parametri}, we can determine parameters $p,q,n,m,k,l,r_1,t_1$. Now, we will try to distinguish the subgraph $H_e$ from other maximal complete bipartite subgraphs of the graph $\mathcal D(\mathbf G)$. Let us note that exactly one neighborhood class from both $X_e$ and $Y_e$ have the SDK-property. Cardinalities of classes in question are $q^l-1$ and $p^k-1$. According to Lemma \ref{SDK}, in every other maximal complete bipartite subgraph which have two classes with the SDK-property, cardinalities of these classes are $(p^u-p^{u-1})q^{l-1}(q^{v+1}-q^v)$ and $(q^v-q^{v-1})p^{k-1}(p^{u+1}-p^u)$, which is greater than cardinalities of corresponding classes from $H_e$. Thus we have determined $H_e$. Now we can determine the numbers of elements of any order in groups $\mathbf P$ and $\mathbf Q$. Namely, there are exactly $q^l-1$ elements of order $q$ in $\mathbf Q$, while the total number of elements in neighborhood classes of $X$ of cardinality $q^{l-1}(q^u-q^{u-1})$ is equal to the number of elements of order $q^u$ in $\mathbf Q$. Therefore, in this way we can determine order sequences of groups $\mathbf P$ and $\mathbf Q$, and then, according to Proposition \ref{niz redova}, the groups $\mathbf P$ and $\mathbf Q$ are uniquely determined.
\end{proof}

%

\vbox{
\vbox{\noindent
Ivica Bo\v snjak

University of Novi Sad, Department of Mathematics and Informatics, Serbia

{\it e-mail}: \href{mailto:ivb@dmi.uns.ac.rs}{ivb@dmi.uns.ac.rs}}\

\vbox{\noindent
Roz\' alia Madar\' asz

University of Novi Sad, Department of Mathematics and Informatics, Serbia

{\it e-mail}: \href{mailto:rozi@dmi.uns.ac.rs}{rozi@dmi.uns.ac.rs}}\

\vbox{\noindent
Samir Zahirovi\' c

University of Novi Sad, Department of Mathematics and Informatics, Serbia

{\it e-mail}: \href{mailto:samir.zahirovic@dmi.uns.ac.rs}{samir.zahirovic@dmi.uns.ac.rs}}}

\end{document}